\documentclass[12pt,twoside,leqno]{article}
\usepackage{amsthm, amsfonts, amssymb, amsmath, amscd, enumerate}
\usepackage[colorlinks=true]{hyperref}
\usepackage[mathscr]{eucal}
\usepackage{mathrsfs}
\usepackage[all]{xy}
\usepackage{comment}
\usepackage{tikz-cd}
\usepackage{mathtools}
\setlength{\oddsidemargin}{0cm}
\setlength{\evensidemargin}{0cm}
\setlength{\textheight}{22cm}
\setlength{\textwidth}{16cm}\theoremstyle{plain}
\newcommand{\CA}{{\mathcal {A}}}
\newcommand{\CB}{{\mathcal {B}}}

\newcommand{\CO}{{\mathcal {O}}}

\newcommand{\CX}{{\mathcal {X}}}
\newcommand{\CY}{{\mathcal {Y}}}
\newcommand{\CZ}{{\mathcal {Z}}}

\newcommand{\Char}{{\mathrm{char}}}
\newcommand{\Br}{{\mathrm{Br}}}

\newcommand{\Gal}{{\mathrm{Gal}}}

\newcommand{\Hom}{{\mathrm{Hom}}}

\newcommand{\Ker}{{\mathrm{Ker}}}

\newcommand{\NS}{{\mathrm{NS}}}

\newcommand{\rank}{{\mathrm{rank}}}
\newcommand{\Pic}{\mathrm{Pic}}

\renewcommand{\Im}{{\mathrm{Im}}}

\font\cyr=wncyr10  \newcommand{\Sha}{\hbox{\cyr X}}

\newcommand{\tor}{{\mathrm{tor}}}

\newcommand{\Tr}{{\mathrm{Tr}}}

\newcommand{\non}{{\mathrm{non}\text{-}}}

\newcommand{\Nm}{{\mathrm{Nm}}}

\DeclareMathOperator{\Spec}{Spec}

\newcommand{\QQ}{\mathbb{Q}}

\newcommand{\ZZ}{\mathbb{Z}} 
 
\newcommand{\GG}{\mathbb{G}}

\newcommand{\lra}{\longrightarrow}

\newcommand{\Coker}{\mathrm{Coker}} 
 
\newcommand{\et}{\mathrm{et}}

\newcommand{\SA}{\mathscr{A}} 
\newcommand{\SB}{\mathscr{B}}

\newtheorem{thm}{Theorem}[section]

\newtheorem{lem}[thm]{Lemma}
\newtheorem{prop}[thm]{Proposition}
\newtheorem{conj}[thm]{Conjecture}

\theoremstyle{definition}

\theoremstyle{remark}
\newtheorem{rem}[thm]{Remark}


\begin{document}

\title{ On geometric Brauer groups and Tate-Shafarevich groups}

\author{Yanshuai Qin}

\maketitle
\tableofcontents
\newpage
\section{Introduction}
For any noetherian scheme $X$,  the \emph{cohomological Brauer group}
$$
\Br(X):=H^2(X,\mathbb{G}_m)_{\tor}
$$
is defined to be the torsion part of the etale cohomology group $H^2(X,\GG_m)$.
\subsection{Tate conjecture and Brauer group}
Let us first recall the Tate conjecture for divisors over finitely generated fields.
\begin{conj}\label{Tateconj} ( Conjecture $T^1(X,\ell)$)
Let $X$ be a projective and smooth variety over a finitely generated field $k$ of characteristic $p\geq0$, and $\ell\neq p$ be a prime number. Then the cycle class map\\
$$
\Pic(X)\otimes_\mathbb{Z}\mathbb{Q}_\ell\longrightarrow H^2(X_{k^s},\mathbb{Q}_\ell(1))^{G_k}
$$
is surjective.
\end{conj}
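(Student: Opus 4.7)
The plan is to translate the Tate conjecture into a finiteness statement about a geometric Brauer group, reduce from a finitely generated base field to a finite field by specialization, and then invoke (or attempt to establish) the Tate conjecture over finite fields. The overall architecture is three steps: reformulation, specialization, and the finite-field case.

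Step one (reformulation). From the Kummer sequence $1\to\mu_{\ell^n}\to\GG_m\xrightarrow{\ell^n}\GG_m\to 1$ on $X_{k^s}$, I take the long exact sequence of etale cohomology, pass to the inverse limit in $n$, and tensor with $\QQ_\ell$ to obtain the $G_k$-equivariant short exact sequence
$$
0\longrightarrow \NS(X_{k^s})\otimes\QQ_\ell \longrightarrow H^2(X_{k^s},\QQ_\ell(1))\longrightarrow V_\ell\Br(X_{k^s})\longrightarrow 0.
$$
Taking $G_k$-invariants and comparing the natural map $\Pic(X)\otimes\QQ_\ell\to (\NS(X_{k^s})\otimes\QQ_\ell)^{G_k}$ (whose cokernel is controlled by finite groups coming from $H^1(G_k,\NS(X_{k^s})_{\tor})$ and $\Pic^0$), surjectivity of the cycle class map in Conjecture \ref{Tateconj} becomes equivalent to $V_\ell\Br(X_{k^s})^{G_k}=0$, or equivalently to the finiteness of $\Br(X_{k^s})[\ell^\infty]^{G_k}$. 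I would write out this standard translation carefully, keeping track of the finite error terms, so that the remaining work is a pure finiteness question.

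Step two (specialization). Spreading $X$ out to a smooth projective morphism $\pi\colon\SX\to S$ over a regular integral scheme $S$ of finite type over $\Spec\ZZ[1/\ell]$ with function field $k$, smooth and proper base change makes $R^2\pi_*\QQ_\ell(1)$ a lisse sheaf on $S$. Using the specialization homomorphism on Picard groups (injective modulo torsion away from bad primes) and the comparison of the $G_k$-action on $H^2(X_{k^s},\QQ_\ell(1))$ with the $\Frob_s$-actions on special fibers, together with a Hilbert-irreducibility/Chebotarev-type argument to find closed points $s$ whose arithmetic Frobenius image is dense in the geometric monodromy group, I would reduce $T^1(X,\ell)$ to $T^1(\SX_s,\ell)$ for one (indeed, for a set of positive density of) closed point $s\in S$, i.e.\ to the Tate conjecture over a finite field.

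Step three (the finite-field case) is the hard part and is where the plan must branch according to the geometry of $X$. For abelian varieties one invokes Tate's theorem; for $K3$ surfaces one uses the Kuga--Satake construction and the theorems of Madapusi Pera and Kim--Madapusi Pera; more generally one tries to dominate $X$ by, or realize its motive inside, a variety in one of these known classes. The fundamental obstruction is that no general mechanism is known to produce algebraic divisors from Galois-invariant $\ell$-adic classes in $H^2(X_{k^s},\QQ_\ell(1))$: Deligne's weight arguments constrain the Galois representation but do not construct cycles, and the Hodge-theoretic tools used for $K3$s do not obviously extend. For a general projective smooth $X$ over a finitely generated $k$, therefore, the plan delivers a clean finite-field reduction but not a proof, and it is exactly the construction of cycles in Step three that would have to be overcome.
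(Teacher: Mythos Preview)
The statement you are attempting to prove is labeled \emph{Conjecture} in the paper, not \emph{Theorem}, and the paper makes no attempt whatsoever to prove it. Conjecture~\ref{Tateconj} (the Tate conjecture for divisors) is an open problem in general; the paper invokes it only as context, recalling via Proposition~\ref{prop2.1} the standard reformulation $T^1(X,\ell)\Longleftrightarrow$ finiteness of $\Br(X_{k^s})^{G_k}(\ell)$. The actual results of the paper (Theorems~\ref{thm1} and~\ref{thm2}) are of a different nature: they show that \emph{if} $\Br(X_{K^s})^{G_K}(\ell)$ is finite for one prime $\ell\neq p$, then it is finite for all $\ell\neq p$. No instance of $T^1$ is established unconditionally.

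Your proposal is thus aimed at the wrong target. To your credit, you recognize this yourself: your Step~three explicitly concedes that for general $X$ the plan ``delivers a clean finite-field reduction but not a proof,'' and that the construction of cycles is the unresolved obstruction. That diagnosis is correct, and your Step~one is exactly the reformulation the paper records (and proves carefully in Proposition~\ref{prop2.1}). But there is nothing here to compare against a ``paper's proof,'' because none exists; the honest summary is that the statement is a conjecture, your outline is the standard reduction strategy, and the decisive step remains open.
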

By a well-known result (see Proposition $\ref{prop2.1}$ ), $T^1(X,\ell)$ is equivalent to the finiteness of $\Br(X_{k^s})^{G_k}(\ell)$.
\subsection{Main theorems}
\begin{thm}\label{thm1}
Let $X$ be a smooth projective variety over a finitely generated field $K$ of characteristic $p>0$. Assuming that $\Br(X_{K^s})^{G_K}(\ell)$ is finite for some prime $\ell\neq p$, then $\Br(X_{K^s})^{G_K}(\non p)$ is finite.
\end{thm}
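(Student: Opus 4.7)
The plan is to translate the hypothesis through Proposition~\ref{prop2.1} into an $\ell$-independence statement for certain $\ell$-adic dimensions, and then to establish that independence by spreading $X$ out to a smooth integral model over $\FF_p$ and reducing to the case of a smooth projective variety over a finite field, where Deligne's Weil~II supplies the result directly.

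\emph{Step 1 (Reformulation).} By Proposition~\ref{prop2.1}, finiteness of $\Br(X_{K^s})^{G_K}(\ell)$ is equivalent to the Tate conjecture $T^1(X,\ell)$, which in turn is equivalent to
$$
\dim_{\QQ_\ell} H^2(X_{K^s},\QQ_\ell(1))^{G_K} \,=\, \rho,\qquad \rho := \rank_\ZZ \NS(X_{K^s})^{G_K}.
$$
The right-hand side is finite by Lang--N\'eron and manifestly $\ell$-independent, and the inequality $\ge$ on the left always holds via the injectivity of the cycle class map. So finiteness of $\Br(X_{K^s})^{G_K}(\ell')$ for every $\ell'\neq p$ reduces to showing that the $\ell$-adic dimension on the left is independent of $\ell\neq p$. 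To upgrade ``finite for each $\ell'$'' into ``the whole non-$p$ part is finite'', I would additionally invoke the standard fact that $H^2(X_{K^s},\ZZ_\ell(1))$ is torsion-free for all but finitely many $\ell$ (spreading out and applying smooth-proper base change over $\FF_p$), which via the Kummer sequence forces $\Br(X_{K^s})^{G_K}(\ell)=0$ once its corank vanishes.

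\emph{Step 2 (Spreading out and the finite field case).} Choose a smooth projective morphism $\pi:\CX\to S$ with $S$ smooth, integral, separated, of finite type over $\FF_p$ and $K=k(S)$. Then $\CV_\ell:=R^2\pi_*\QQ_\ell(1)$ is a lisse $\QQ_\ell$-sheaf on $S$ with geometric generic stalk $H^2(X_{K^s},\QQ_\ell(1))$, and since $G_K\twoheadrightarrow \pi_1(S,\bar\eta)$ we have $H^2(X_{K^s},\QQ_\ell(1))^{G_K}=\CV_{\ell,\bar\eta}^{\pi_1(S,\bar\eta)}$. By Deligne's Weil~II, $\{\CV_\ell\}_{\ell\neq p}$ is a compatible system pure of weight zero: for every closed $s\in S$ the characteristic polynomial of $\Frob_s$ on $\CV_\ell$ lies in $\ZZ[T]$ and is independent of $\ell$. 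When $S$ is a point (that is, $K$ is finite) the invariants are simply $\ker(\Frob-1)$, whose dimension is the multiplicity of $1$ as an eigenvalue of Frobenius, hence $\ell$-independent; this already settles the theorem over finite fields.

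\emph{Step 3 (Main obstacle: reducing the general case to a closed fiber).} The hard part is to transfer the finite field case back to the generic fiber $X$. The plan is to produce a closed point $s\in S$ at which
$$
\dim_{\QQ_\ell}\CV_{\ell,\bar\eta}^{\pi_1(S,\bar\eta)} \,=\, \dim_{\QQ_\ell}\CV_{\ell,\bar s}^{\Frob_s}\quad\text{for every } \ell\neq p.
$$
Conceptually this amounts to producing a Frobenius whose Zariski closure inside the monodromy image is as large as possible, \emph{uniformly} in $\ell$. By Serre's philosophy of compatible families the algebraic monodromy groups for different $\ell$ are ``the same'', so a Chebotarev/Hilbert-irreducibility argument on $S$ should furnish such a point $s$. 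One must also verify that the specialization $\NS(X_{K^s})^{G_K}\hookrightarrow \NS((\CX_s)_{\overline{k(s)}})^{G_{k(s)}}$ can be arranged to preserve ranks at $s$, which is where I expect the most delicate input to be required. Once this uniform density is in place, combining the displayed equality with the $\ell$-independence over $k(s)$ from Step~2 yields the $\ell$-independence of $\dim_{\QQ_\ell}H^2(X_{K^s},\QQ_\ell(1))^{G_K}$, and Step~1 converts this back into the finiteness of $\Br(X_{K^s})^{G_K}(\non p)$.
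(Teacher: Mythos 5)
Your Step 1 contains the gap that sinks the argument. The theorem is not merely the statement that $\Br(X_{K^s})^{G_K}(\ell')$ is finite for every $\ell'\neq p$ (that $\ell$-independence of finiteness is already available and is simply quoted by the paper from \cite[Cor.~1.7]{Qin2}); it is the statement that the whole group $\Br(X_{K^s})^{G_K}(\non p)$ is finite, i.e.\ that $\Br(X_{K^s})^{G_K}[\ell]=0$ for all but finitely many $\ell$. Your parenthetical claim that corank zero together with torsion-freeness of $H^2(X_{K^s},\ZZ_\ell(1))$ for $\ell\gg 0$ ``forces $\Br(X_{K^s})^{G_K}(\ell)=0$'' is false as stated: taking $G_K$-invariants of the (split, for $\ell\gg0$) Kummer sequence leaves the cokernel of $(\NS(X_{K^s})/\ell)^{G_K}\lra H^2(X_{K^s},\ZZ/\ell(1))^{G_K}$, and the obstruction to its vanishing is the $\ell$-torsion of $H^1_{cts}(G_K,H^2(X_{K^s},\ZZ_\ell(1)))$, equivalently the failure of $G_K$-invariants to commute with reduction mod $\ell$. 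Even over a finite field, vanishing of $V_\ell\Br^{G_k}$ plus a torsion-free lattice only says the invariants of a divisible group of cofinite type are finite; their order is governed by the $\ell$-part of $\det(1-F)$ on the non-algebraic piece, and one needs an integrality/compatibility argument for characteristic polynomials (Tate's lemma, Weil~I--II) to kill it for $\ell\gg0$. Over a genuinely finitely generated $K$ the problem is harder still: after spreading out, $H^1$ of the base with coefficients in $R^2\pi_*\ZZ_\ell(1)$ enters, and controlling its torsion uniformly in $\ell$ is exactly the content of the paper's Sections~3--4 (degeneration of the Leray spectral sequence for $\ell\gg0$, Gabber's theorem, integral compatible systems, and the vanishing $H^3(X_{k^s},\ZZ_\ell(1))^{G_k}=0$ for $\ell\gg0$ for smooth non-proper varieties via alterations and purity). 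None of this is addressed by your proposal.

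Separately, Step 3 is not a proof even of the part it targets: the existence of a closed point $s$ with $\dim\CV_{\ell,\bar\eta}^{\pi_1}=\dim\CV_{\ell,\bar s}^{\Frob_s}$ \emph{uniformly in $\ell$} would require knowing that the algebraic monodromy groups (not just characteristic polynomials of Frobenii) form a compatible family in a strong sense, which is not available in this generality; ``Serre's philosophy'' and a Chebotarev argument do not furnish such an $s$. Fortunately this portion is dispensable, since the $\ell$-independence of finiteness can be cited from \cite{Qin2}; but the passage from finiteness at each $\ell$ to finiteness of the full prime-to-$p$ part --- the actual assertion of the theorem --- is missing from your argument.
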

In the case that $K$ is finite, the above theorem was proved by Tate \cite[Thm. 5.2]{Tat3} and Lichtenbaum \cite{Lic}. The general case was proved by Cadoret-Hui-Tamagawa\cite{CHT}. Skorobogatov-Zarkhin \cite{SZ1,SZ2} proved the finiteness of $\Br(X_{k^s})^{G_k}(\non p)$ for abelian varieties and $K3$ surfaces($\Char(k)\neq 2$). In this paper, we will give an elementary proof of the theorem based on the idea of Tate and Lichtenbaum's proof for the case over finite fields.
\begin{thm}\label{thm2}
Let $\CY$ be a smooth irreducible variety over a finite field of characteristic $p>0$ with function field $K$. Let $A$ be an abelian variety over $K$. Define 
$$\Sha_{\CY}(A):=\Ker(H^1(K,A)\lra \prod_{s\in \CY^1}H^1(K_s^{sh}, A)),$$
where $\CY^1$ denotes the set of points of codimension  $1$ and $K_s^{sh}$ denotes the quotient field of a strict local ring at $s$. Assuming that $\Sha_{\CY}(A)(\ell)$ is finite for some prime $\ell\neq p$, then $\Sha_{\CY}(A)(\non p)$ is finite.
\end{thm}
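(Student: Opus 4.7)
The idea is to reduce Theorem \ref{thm2} to Theorem \ref{thm1}. Since abelian varieties are projective, I take $X:=A$ as our smooth projective variety over $K$, so $\Pic^0_{X/K}=\hat A$. A polarization gives an isogeny $A\to\hat A$ of some degree $d$, and such an isogeny induces maps of Tate--Shafarevich groups whose kernels and cokernels are killed by $d$; hence for every prime $\ell\nmid d$ (in particular for all but finitely many $\ell\neq p$) the groups $\Sha_\CY(A)(\ell)$ and $\Sha_\CY(\hat A)(\ell)$ have the same finiteness behavior. It therefore suffices to relate $\Sha_\CY(\hat A)(\ell)$ to $\Br(X_{K^s})^{G_K}(\ell)$ and invoke Theorem \ref{thm1}.

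To make that relation precise, I would spread $X/K$ out to a proper flat morphism $f:\CX\to\CY$, shrinking $\CY$ and applying a de Jong alteration if necessary so that $\CX$ is regular. Two spectral sequences then enter: the Leray spectral sequence for $f$ applied to $\GG_m$ on the \'etale site, which relates $\Br(\CX)$ to $H^1(\CY,R^1f_*\GG_m)$ and hence, via the relative Picard sheaf, to $\Sha_\CY(\hat A)$ up to finite contributions from the bad reduction locus (N\'eron component groups and the $\NS$-part of $R^1f_*\GG_m$); and the Hochschild--Serre spectral sequence for $X_{K^s}\to X$, which relates $\Br(\CX)$ to $\Br(X_{K^s})^{G_K}$ modulo finite $\ell$-independent contributions from $H^i(G_K,\Pic(X_{K^s}))$ and $\Br(K)$. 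Combining these, for every $\ell\neq p$, $\Br(X_{K^s})^{G_K}(\ell)$ is finite if and only if $\Sha_\CY(\hat A)(\ell)$ is, up to groups of $\ell$-independent order. The conclusion then follows by a chain of implications: if $\Sha_\CY(A)(\ell)$ is finite for some $\ell\neq p$, then so is $\Sha_\CY(\hat A)(\ell)$ by the isogeny, so is $\Br(X_{K^s})^{G_K}(\ell)$ by the comparison, so is $\Br(X_{K^s})^{G_K}(\non p)$ by Theorem \ref{thm1}, and running the comparison and isogeny backwards yields finiteness of $\Sha_\CY(A)(\non p)$.

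The hard part will be carrying out the comparison with errors bounded uniformly in $\ell$: I must track every finite group appearing in the low-degree exact sequences of the Leray and Hochschild--Serre spectral sequences---component groups of N\'eron models, $H^i(G_K,\Pic(X_{K^s}))$, and contributions supported on $\CY\setminus U$ where $U$ is the good reduction locus---and verify that each has order prime to almost all $\ell$. A secondary subtlety is that $\Sha_\CY$ is defined using strictly Henselian, rather than completed, local rings at codimension-one points of $\CY$, so I must verify that the Leray comparison recovers precisely this local-global obstruction and not some nearby variant. Finally, in characteristic $p>0$ the existence of a regular projective model is not automatic, so I expect to pass through a de Jong alteration and then descend finiteness along the resulting generically finite cover via a trace/norm argument.
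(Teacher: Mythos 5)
There is a genuine gap, and it sits at the heart of your plan: the claimed comparison ``for every $\ell\neq p$, $\Br(X_{K^s})^{G_K}(\ell)$ is finite if and only if $\Sha_\CY(\hat A)(\ell)$ is, up to groups of $\ell$-independent order'' cannot be true, and with it the whole chain of implications collapses. For $X=A$ an abelian variety, $\Br(A_{K^s})^{G_K}(\non p)$ is finite \emph{unconditionally} (Zarhin's theorem, i.e.\ $T^1(A,\ell)$, equivalently Skorobogatov--Zarhin, both cited in the paper), so applying Theorem \ref{thm1} to $X=A$ transfers no information to or from $\Sha_\CY(A)$; and if your equivalence held with $\ell$-independent finite errors, ``running it backwards'' would prove finiteness of $\Sha_\CY(A)(\non p)$ with no hypothesis at all, i.e.\ the finiteness part of BSD over higher-dimensional function fields --- far too strong. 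The object that actually controls $\Sha_\CY(A)$ is not the geometric Brauer group of the generic fiber over $K^s$ but the Brauer group of the total space of a model, viewed over the finite field $k$. Concretely, the paper uses the sequence from \cite[Thm.~1.11]{Qin2}, which relates $V_\ell\Sha_\CY(A)$ to $V_\ell\Br(\SA^t_{k^s})^{G_k}$ for an abelian scheme $\SA^t\to\CY$ extending the dual $A^t$, with error terms $V_\ell\Br(\CY_{k^s})^{G_k}$ and $V_\ell\Br(A^t_{K^s})^{G_K}$; these error terms are \emph{not} finite $\ell$-independent groups in general --- killing them is exactly where the Tate conjecture for abelian varieties and for the base enters. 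Your intermediate finiteness claims are also off over a higher-dimensional $K$: in the Hochschild--Serre sequence for $X_{K^s}\to X$ the terms $\Br(K)$ and $H^i(G_K,\Pic(X_{K^s}))$ are not finite, and over a base of dimension $>1$ the Leray sequence for $\GG_m$ does not identify $\Br(\CX)$ with $\Sha$ ``up to N\'eron component groups'' as it does in the Artin--Grothendieck curve-over-curve situation.

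Two further ingredients of the paper's proof have no counterpart in your plan and are genuinely needed. First, the base: to kill $V_\ell\Br(\CY_{k^s})^{G_k}$ one needs the Tate conjecture for $\CY$, which is unknown in general; the paper reduces to a purely transcendental $K=k(t_1,\dots,t_m)$ (so $\CY$ rational) by a Weil-restriction argument (Proposition \ref{wrest}), using that $\Sha$ is unchanged under Weil restriction along the finite separable extension $K/k(t_1,\dots,t_m)$. Second, after the comparison one must prove an $\ell$-independence/vanishing statement for $\Br(\SA^t_{k^s})^{G_k}(\ell)$, where $\SA^t$ is a smooth but \emph{non-proper} variety over the finite field $k$; this is Theorem \ref{noncomp}, which does not follow formally from Theorem \ref{thm1} (absent resolution of singularities, as the paper remarks) and requires the alteration and norm arguments of Section 5.1 together with Theorem \ref{bigthm}. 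Your polarization step (comparing $\Sha_\CY(A)$ and $\Sha_\CY(\hat A)$ away from the degree of the isogeny) is fine as far as it goes, but it does not address any of these obstacles.
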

In the case that $K$ is a global function field, the above theorem was proved by Tate \cite[Thm. 5.2]{Tat3} and Schneider\cite{Sch}. In \cite{Qin2}, we proved that the finiteness of $\Sha_{\CY}(A)(\ell)$ for a single $\ell$ is equivalent to the BSD conjecture for $A$ over $K$ formulated by Tate (cf. \cite{Tat1}).

\subsection{Notation and Terminology}
\subsubsection*{Fields}
By a \emph{finitely generated field}, we mean a field which is finitely generated over a prime field.\\
For any field $k$, denote by $k^s$ the separable closure. Denote by $G_k=\Gal(k^s/k)$ the absolute Galois group of $k$.
\subsubsection*{Henselization}
Let $R$ be a noetherian local ring, denote by $R^h$ (resp. $R^{sh}$) the henselization (resp. strict henselization) of $R$ at the maximal ideal. If $R$
is a domain, denote by $K^h$ (resp. $K^{sh}$) the fraction field of $R^{h}$ (resp. $R^{sh}$).
\subsubsection*{Varieties}
By a \emph{variety} over a field $k$, we mean a scheme which is separated and of finite type over $k$.

For a smooth proper geometrically connected variety $X$ over a field $k$, we use $\Pic^0_{X/k}$ to denote the identity component of the Picard scheme $\Pic_{X/k}$.
\subsubsection*{Cohomology}
The default sheaves and cohomology over schemes are with respect to the
small \'etale site. So $H^i$ is the abbreviation of $H_{\et}^i$.
\subsubsection*{Brauer groups}
For any noetherian scheme $X$, denote the \emph{cohomological Brauer group}
$$
\Br(X):=H^2(X,\mathbb{G}_m)_{\tor}.
$$
\subsubsection*{Abelian group}
For any abelian group $M$, integer $m$ and prime $\ell$, we set\\
$$M[m]=\{x\in M| mx=0\},\quad M_{\tor}=\bigcup\limits_{m\geq 1}M[m],\quad  M(\ell)=\bigcup\limits_{n\geq 1}M[\ell^n], $$
$$M\hat{\otimes}\ZZ_\ell=\lim \limits_{\substack{\leftarrow \\ n}}M/\ell^n, \quad T_\ell M=\Hom_\mathbb{Z}(\mathbb{Q}_\ell/\mathbb{Z}_\ell, M)=\lim \limits_{\substack{\leftarrow \\ n}}M[\ell^n],\quad V_\ell M= T_\ell(M)\otimes_{\mathbb{Z}_\ell}\mathbb{Q}_\ell.$$
For any torsion abelian group $M$ and prime number $p$, we set
$$M(\non p)=\bigcup\limits_{p\nmid m}M[m].$$ 
A torsion abelian group $M$ is \emph{of cofinite type} if $M[m]$ is finite for any positive integer $m$. We use $M_{div}$ to denote the maximal divisible subgroup of $M$. For a group $G$, we use $|G|$ to denote the order of $G$.


\bigskip
\bigskip
\noindent {\bf Acknowledgements}:

\noindent I would like to thank Xinyi Yuan and Weizhe Zheng for important communications.
\section{Preliminary reductions}
\subsection{The Kummer sequence}
\begin{prop}\label{prop2.1}
Let $X$ be a smooth projective geometrically connected variety over a field $k$. Let $\ell$ be a prime different from char$(k)$, then the exact sequence of $G_k$-representations\\
$$
0\longrightarrow \NS(X_{k^s})/\ell^n\longrightarrow H^2(X_{k^s},\mathbb{Z}/\ell^n(1))\longrightarrow \Br(X_{k^s})[\ell^n]\longrightarrow 0
$$ 
is split  for sufficient large $\ell$. And for any $\ell\neq \Char(k)$, the exact sequence of $G_k$-representations
$$
0\longrightarrow \NS(X_{k^s})\otimes_\ZZ\mathbb{Q}_\ell\longrightarrow H^2(X_{k^s},\mathbb{Q}_\ell(1)) \longrightarrow V_\ell \Br(X_{k^s})\longrightarrow 0
$$
is split. Taking $G_k$-invariant, there is an exact sequence\\
$$
0\longrightarrow \NS(X)\otimes_\ZZ\mathbb{Q}_\ell\longrightarrow H^2(X_{k^s},\mathbb{Q}_\ell(1))^{G_k} \longrightarrow V_\ell \Br(X_{k^s})^{G_k}\longrightarrow 0.
$$
\end{prop}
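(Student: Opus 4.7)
The starting point is the Kummer sequence $0\to\mu_{\ell^n}\to\GG_m\xrightarrow{\ell^n}\GG_m\to 0$ on $X_{k^s}$. Its \'etale long exact sequence, combined with the divisibility of $\Pic^0(X_{k^s})$ (which is the group of $k^s$-points of an abelian variety over an algebraically closed field), yields
\[
0\to \NS(X_{k^s})/\ell^n\to H^2(X_{k^s},\mu_{\ell^n})\to \Br(X_{k^s})[\ell^n]\to 0.
\]
Passing to the inverse limit in $n$ (Mittag--Leffler is immediate from surjectivity of the transition maps) and tensoring with $\QQ_\ell$ produces the $\QQ_\ell$-coefficient sequence of the statement.

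To construct a $G_k$-equivariant splitting I would use Poincar\'e duality together with Deligne's Hard Lefschetz theorem. Set $d=\dim X$ and assume $d\geq 2$, the cases $d\leq 1$ being trivial because $\Br(X_{k^s})$ and hence $V_\ell\Br$ vanish. Pick the class $L\in H^2(X_{k^s},\QQ_\ell(1))^{G_k}$ of a very ample line bundle on $X$. Hard Lefschetz gives a $G_k$-equivariant isomorphism $L^{d-2}\colon H^2(X_{k^s},\QQ_\ell(1))\xrightarrow{\sim}H^{2d-2}(X_{k^s},\QQ_\ell(d-1))$, and composing with Poincar\'e duality produces a $G_k$-equivariant non-degenerate symmetric bilinear form $\phi$ on $H^2(X_{k^s},\QQ_\ell(1))$. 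Its restriction to $\NS(X_{k^s})\otimes\QQ_\ell$ is the standard Lefschetz intersection form, non-degenerate by the Hodge index theorem, so the $G_k$-stable orthogonal complement $(\NS(X_{k^s})\otimes\QQ_\ell)^{\perp}$ is a direct summand and splits the $\QQ_\ell$-sequence. For the mod-$\ell^n$ sequence I would run the same argument over $\ZZ_\ell$: for all but finitely many $\ell$, the groups $H^i(X_{k^s},\ZZ_\ell)$ are torsion-free for $i=2,3$, the map $L^{d-2}$ is a $\ZZ_\ell$-isomorphism, and the discriminant of the Lefschetz form on $\NS(X_{k^s})$ modulo torsion is an $\ell$-adic unit. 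Under these conditions the orthogonal-complement construction gives a $\ZZ_\ell$-splitting of the integral sequence $0\to\NS(X_{k^s})\hat{\otimes}\ZZ_\ell\to H^2(X_{k^s},\ZZ_\ell(1))\to T_\ell\Br(X_{k^s})\to 0$, and reducing modulo $\ell^n$ (using torsion-freeness of $H^3(X_{k^s},\ZZ_\ell(1))$ to identify $H^2(X_{k^s},\mu_{\ell^n})\cong H^2(X_{k^s},\ZZ_\ell(1))/\ell^n$) yields the claimed splitting of the first sequence.

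Finally, since the $\QQ_\ell$-sequence splits as a short exact sequence of $G_k$-representations, taking $G_k$-invariants preserves exactness and gives
\[
0\to(\NS(X_{k^s})\otimes\QQ_\ell)^{G_k}\to H^2(X_{k^s},\QQ_\ell(1))^{G_k}\to V_\ell\Br(X_{k^s})^{G_k}\to 0.
\]
One then identifies $(\NS(X_{k^s})\otimes\QQ_\ell)^{G_k}$ with $\NS(X)\otimes\QQ_\ell$: the $G_k$-action on the finitely generated lattice $\NS(X_{k^s})$ factors through a finite quotient (the stabilizer of an ample class in the orthogonal group of $\NS$ is finite by Hodge index), and an averaging argument combined with Hochschild--Serre for $\GG_m$ and Hilbert 90 gives the identification after tensoring with $\QQ_\ell$. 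The main obstacle is the integral step in the previous paragraph: one must verify that outside a finite set of primes the various torsion and discriminant conditions simultaneously hold, and it is exactly this list of excluded primes that sets the meaning of ``sufficiently large $\ell$'' in the statement.
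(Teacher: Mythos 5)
Your argument is correct in substance, but it splits the sequence by a genuinely different mechanism than the paper. The paper never invokes hard Lefschetz or the Hodge index theorem: it pairs $\NS(X_{k^s})\subset H^2(X_{k^s},\QQ_\ell(1))$ against a $G_k$-stable finitely generated group $W_0$ of $1$-cycle classes sitting in $H^{2d-2}(X_{k^s},\QQ_\ell(d-1))$, using only Poincar\'e duality and the fact that $\tau$-equivalence coincides with numerical equivalence for divisors (SGA~6), which makes the divisor--curve pairing non-degenerate on $\NS\otimes\QQ$; choosing $W_0$ so that $\NS(X_{k^s})\times W_0\to\ZZ$ becomes perfect after inverting one integer $N$, the complement $W_0^{\perp}$ (resp.\ $(W_0/\ell^n)^{\perp}$ for $\ell\nmid N$) furnishes the splitting, with ``sufficiently large $\ell$'' meaning $\ell\nmid N$. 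You instead make $H^2$ self-dual via $L^{d-2}$ and cut out $(\NS\otimes\QQ_\ell)^{\perp}$, which forces you to import Deligne's hard Lefschetz theorem, its integral refinement for almost all $\ell$ (this is exactly Gabber's theorem cited as \cite{Gab} in Lemma~\ref{spread}(a), and you should attribute it), and the Hodge standard conjecture for divisors to know the restricted form is non-degenerate with discriminant an $\ell$-adic unit. What your route buys is a canonical complement of $\NS$ itself and an explicit list of excluded primes (torsion in $H^2,H^3$, failure of integral hard Lefschetz, the discriminant); what the paper's route buys is a much lighter set of inputs, at the cost of an auxiliary, non-canonical choice of $W_0$. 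Your treatment of the invariants step (identifying $(\NS(X_{k^s})\otimes\QQ_\ell)^{G_k}$ with $\NS(X)\otimes\QQ_\ell$) is in fact more detailed than the paper, which asserts it without proof.

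Two small points to tighten: $k^s$ is only separably closed and $\Pic^0_{X/k}$ may be non-reduced in characteristic $p$, so you should justify $\ell$-divisibility of the algebraically trivial part of $\Pic(X_{k^s})$ via \'etaleness of multiplication by $\ell$ for $\ell\neq\Char(k)$ rather than by calling it the points of an abelian variety over an algebraically closed field; and in the mod-$\ell^n$ step you should note that $\NS(X_{k^s})$ has no $\ell$-torsion for $\ell\gg0$, so that $\NS(X_{k^s})/\ell^n$ really is the reduction of your lattice $N$. Neither affects the correctness of the argument.
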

\begin{proof}
Let $d$ denote the dimension of $X$. Let $Z_1(X_{k^s})$ denote the group of $1$-cycles on $X_{k^s}$, it admits a  natural $G_k$ action.
Since  $\tau$-equivalence is same as the numerical equivalence for divisors (cf., e.g. SGA 6 XIII, Theorem 4.6), thus the intersection pairing
$$
\NS(X_{k^s})_\mathbb{Q}\times Z_1(X_{k^s})_\mathbb{Q}\longrightarrow \mathbb{Q}
$$
is left non-degenerate. 
Since $\NS(X_{k^s})$ is finitely generated, so there exists a finite dimensional  $G_k$ invariant subspace $W$ of $Z^1(X_{k^s})_\mathbb{Q}$ such that the restriction of the intersection pairing to $\NS(X_{k^s})_\mathbb{Q}\times W$ is left non-degenerate. Since  $G_k$-actions factor through a finite quotient of $G_k$, we can choose $W$ such that the pairing is actually perfect. Let $W_{\mathbb{Q}_\ell}$ denote the subspace of $H^{2d-2}(X_{k^s},\mathbb{Q}_\ell(d-1))$ generated by cycle classes of $W$. Then the restriction of 
$$
H^2(X_{k^s},\mathbb{Q}_\ell(1))\times H^{2d-2}(X_{k^s},\mathbb{Q}_\ell(d-1)) \longrightarrow H^{2d}(X_{k^s},\mathbb{Q}_\ell(d)) \cong \mathbb{Q}_\ell
$$
to $\NS(X_{k^s})_{\mathbb{Q}_\ell}\times W_{\mathbb{Q}_\ell}$ is also perfect. So we have
$$
H^2(X_{k^s},\mathbb{Q}_\ell(1))=\NS(X_{k^s})_{\mathbb{Q}_\ell}\oplus W^{\perp}_{\mathbb{Q}_\ell}.
$$
This proves the second claim.

There exists a finitely generated $G_k$ invariant subgroup $W_0$ of $Z^1(X_{k^s})$ such that $W=W_0\otimes_\ZZ \mathbb{Q}$. Therefore, there exists a positive integer $N$, such that the base change of the pairing $\NS(X_{k^s})\times W_0 \longrightarrow \mathbb{Z}$ to $\mathbb{Z}[N^{-1}]$ is perfect. So for any $\ell \nmid N$, the intersection pairing
$$ 
 \NS(X_{k^s})/\ell^n \times W_0/\ell^n \longrightarrow \mathbb{Z}/\ell^n
$$
is perfect. Since it is compatible with 
$$
H^2(X_{k^s},\mathbb{Z}/\ell^n(1))\times H^{2d-2}(X_{k^s},\mathbb{Z}/\ell^n(d-1)) \longrightarrow H^{2d}(X_{k^s},\mathbb{Z}/\ell^n(d)) \cong \mathbb{Z}/\ell^n.
$$
Thus we have
$$
H^2(X_{k^s},\mathbb{Z}/\ell^n(1))=\NS(X_{k^s})/\ell^n\oplus (W_0/\ell^n)^{\perp}.
$$
This proves the first claim.
\end{proof}
\subsection{Reduce to the vanishing of Galois cohomology}
Let $X$ be a smooth projective geometrically connected variety over a finitely generated field $K$ of characteristic $p>0$. Assuming that $\Br(X_{K^s})^{G_K}(\ell)$ is finite for some prime $\ell\neq p$, by \cite[Cor. 1.7]{Qin2}, $\Br(X_{K^s})^{G_K}(\ell)$ is finite for all primes $\ell\neq p$. Thus, to prove Theorem \ref{thm1}, it suffices to prove $\Br(X_{K^s})^{G_K}[\ell]=0$ for $\ell \gg 0$ under the assumption of the finiteness of $\Br(X_{K^s})^{G_K}(\ell)$. By Proposition \ref{prop2.1}, for $\ell\gg 0$, there is an exact sequence
$$
0\longrightarrow (\NS(X_{K^s})/\ell)^{G_K}\longrightarrow H^2(X_{K^s},\mathbb{Z}/\ell(1))^{G_K}\longrightarrow \Br(X_{K^s})^{G_K}[\ell]\longrightarrow 0.
$$ 
Therefore, it suffices to show that $(\NS(X_{K^s})/\ell)^{G_K}\rightarrow H^2(X_{K^s},\mathbb{Z}/\ell(1))^{G_K}$ is surjective for $\ell\gg 0$. Consider the following exact sequence of $\ell$-adic sheaves on $X$
$$0\lra \ZZ_\ell(1)\stackrel{\ell}{\lra}\ZZ_\ell(1)\lra \ZZ/\ell(1)\lra 0.$$
It gives a long exact sequence
$$ H^2(X_{K^s}, \ZZ_\ell(1))\stackrel{\ell}{\lra} H^2(X_{K^s}, \ZZ_\ell(1))\lra H^2(X_{K^s}, \ZZ/\ell(1))\lra H^3(X_{K^s}, \ZZ_\ell(1))[\ell] \lra 0.
$$
By a theorem of Gabber (cf.\cite{Gab}), $H^3(X_{K^s}, \ZZ_\ell(1))[\ell]=0$ for $\ell\gg 0$. Thus, for $\ell \gg 0$, there is a canonical isomorphsim
$$H^2(X_{K^s}, \ZZ_\ell(1))/\ell\cong H^2(X_{K^s}, \ZZ/\ell(1)).$$
Assuming that $\Br(X_{K^s})^{G_K}(\ell)$ is finite, by the exact sequence
$$0\lra (\NS(X_{K^s})\otimes_{\ZZ}\ZZ_\ell)^{G_K}\lra H^2(X_{K^s}, \ZZ_\ell(1))^{G_K}\lra T_\ell\Br(X_{K^s})^{G_K},
$$
we have 
$$(\NS(X_{K^s})\otimes_{\ZZ}\ZZ_\ell)^{G_K}\cong H^2(X_{K^s}, \ZZ_\ell(1))^{G_K}.
$$
Thus, it suffices to show that the natural map
$$H^2(X_{K^s}, \ZZ_\ell(1))^{G_K}/\ell\lra (H^2(X_{K^s}, \ZZ_\ell(1))/\ell)^{G_K}
$$
is surjective for $\ell\gg 0$. By Gabber's theorem(cf.\cite{Gab}), $H^2(X_{K^s}, \ZZ_\ell(1))[\ell]=0$ for $\ell\gg 0$. Thus, for $\ell \gg 0$, there is an exact sequence
$$0\lra H^2(X_{K^s}, \ZZ_\ell(1))^{G_K}\stackrel{\ell}{\lra} H^2(X_{K^s}, \ZZ_\ell(1))^{G_K}\lra (H^2(X_{K^s}, \ZZ_\ell(1))/\ell)^{G_K}$$
$$\lra H^1_{cts}(G_K,H^2(X_{K^s}, \ZZ_\ell(1)))[\ell]\lra 0.$$
Therefore, the question is reduced to show
$$H^1_{cts}(G_K,H^2(X_{K^s}, \ZZ_\ell(1)))[\ell]=0 \ \mathrm{for} \ \ell \gg 0$$
under the assumption of the finiteness of $\Br(X_{K^s})^{G_K}(\ell)$.
\section{Proof of Theorem \ref{thm1}}
Let $X$ be a smooth projective geometrically connected variety over a finitely generated field $K$ of characteristic $p>0$. Assuming that $\Br(X_{K^s})^{G_K}(\ell)$ is finite for all prime $\ell\neq p$, we will show
\begin{equation}\label{key}
H^1_{cts}(G_K,H^2(X_{K^s}, \ZZ_\ell(1)))[\ell]=0 \ \mathrm{for} \ \ell \gg 0.
\end{equation}
\subsection{The spreading out trick}

\begin{lem}\label{spread}
Let $\CY$ be an affine smooth geometrically connected variety over a finite field $k$ with function field $K$. Let $\pi:\CX \lra \CY $ be a smooth projective morphism with a generic fiber $X$ geometrically connected over $K$. Let $\ell\neq \Char(k)$ be a prime. Then the following statements are true.
\begin{itemize}
\item[(a)]	
The spectral sequence 
$$E^{p,q}_2=H^p(\CY_{k^s},R^q\pi_*\ZZ_\ell(1))\Rightarrow H^{p+q}(\CX_{k^s},\ZZ_\ell(1))
$$
degenerates at $E_2$ for $\ell\gg 0$. 
\item[(b)]
There is an exact sequence 
$$0\lra H^1(\CY, R^2\pi_*\ZZ_\ell(1))\lra H^1_{cts}(K,H^2(X_{K^s}, \ZZ_\ell(1)))\lra \prod_{y\in \CY}H^1_{cts}(K_{y}^{sh},H^2(X_{K^s}, \ZZ_\ell(1))).
$$
\item[(c)]
For $\ell \gg 0$, we have 
$$H^1_{cts}(K,H^2(X_{K^s}, \ZZ_\ell(1)))[\ell] \cong H^1(\CY, R^2\pi_*\ZZ_\ell(1))[\ell].$$
\item[(d)] There is an exact sequence
$$0\lra H^0(\CY_{k^s}, R^2\pi_*\ZZ_\ell(1))_{G_k}\lra H^1(\CY, R^2\pi_*\ZZ_\ell(1))\lra H^1(\CY_{k^s}, R^2\pi_*\ZZ_\ell(1))^{G_k}.$$

\end{itemize}
\end{lem}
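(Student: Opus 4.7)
The plan is to prove the four parts essentially independently, with (a) the most delicate step.

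For (a), I would invoke Deligne's decomposition theorem for smooth projective morphisms: for $\ell\neq \Char(k)$, there is a (non-canonical) isomorphism $R\pi_*\QQ_\ell\cong \bigoplus_q R^q\pi_*\QQ_\ell[-q]$ in the derived category, which forces $E_2$-degeneration of the Leray spectral sequence with $\QQ_\ell$-coefficients. To upgrade to $\ZZ_\ell$-coefficients for $\ell\gg 0$, I would appeal to Gabber's theorem: for $\ell$ sufficiently large both the abutment $H^*(\CX_{k^s},\ZZ_\ell(1))$ and each sheaf $R^q\pi_*\ZZ_\ell(1)$ are torsion-free. The higher differentials $d_r$ vanish rationally, so they land in the torsion subgroups of the $E_r$-pages, which are zero for $\ell\gg 0$. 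The main obstacle here is checking that the $E_2$-page itself is torsion-free (not just the abutment), which requires Gabber applied to $\CY_{k^s}$ together with torsion-freeness of the stalks of the lisse sheaves $R^q\pi_*\ZZ_\ell(1)$.

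For (b), the key ingredient is the Leray spectral sequence for the inclusion $j:\Spec K\hookrightarrow \CY$ applied to $\CF:=R^2\pi_*\ZZ_\ell(1)$. Smooth and proper base change gives $j^*\CF = H^2(X_{K^s},\ZZ_\ell(1))$ as a $G_K$-module, and since $\CF$ is lisse on the normal base $\CY$, the adjunction map $\CF\to j_*j^*\CF$ is an isomorphism. The stalk of $R^1j_*(j^*\CF)$ at a geometric point over $y\in\CY$ is $H^1_{cts}(K_y^{sh},H^2(X_{K^s},\ZZ_\ell(1)))$, and since global sections of an \'etale sheaf inject into the product of its stalks, the five-term sequence from Leray yields (b). For (c), I would take $\ell$-torsion in (b): the lisseness of $\CF$ combined with $\pi_1(\Spec\CO_{\CY,\bar y}^{sh})=1$ shows that $G_{K_y^{sh}}$ acts trivially on $H^2(X_{K^s},\ZZ_\ell(1))$, so
$$H^1_{cts}(K_y^{sh}, H^2(X_{K^s},\ZZ_\ell(1)))[\ell] = \Hom_{cts}(G_{K_y^{sh}}, H^2(X_{K^s},\ZZ_\ell(1))[\ell]),$$
which vanishes for $\ell\gg 0$ by Gabber. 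Hence every $\ell$-torsion class in $H^1_{cts}(K,H^2(X_{K^s},\ZZ_\ell(1)))$ lies in the kernel $H^1(\CY,\CF)$.

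For (d), I would apply the Hochschild--Serre spectral sequence to the $G_k$-Galois cover $\CY_{k^s}\to\CY$. Since $G_k=\widehat{\ZZ}$ has cohomological dimension one, the five-term sequence collapses to the short exact sequence
$$0\to H^0(\CY_{k^s},\CF)_{G_k}\to H^1(\CY,\CF)\to H^1(\CY_{k^s},\CF)^{G_k}\to 0,$$
using the identification $H^1(\widehat{\ZZ},M) = M_{G_k}$. This is strictly stronger than the claimed exact sequence and completes the proof.
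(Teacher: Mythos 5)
Your parts (b), (c) and (d) are essentially the paper's own arguments: Leray for $j:\Spec K\hookrightarrow\CY$ with the adjunction isomorphism $\CF\cong j_*j^*\CF$ for the lisse sheaf $\CF=R^2\pi_*(1)$ and injection of global sections into stalks for (b); triviality of the $G_{K_y^{sh}}$-action via $\pi_1(\Spec\CO^{sh}_{\CY,\bar y})=1$ plus Gabber's torsion-freeness of $H^2(X_{K^s},\ZZ_\ell(1))$ for (c); Hochschild--Serre for $G_k$ with $H^1_{cts}(G_k,M)=M_{G_k}$ for (d). (The paper works at finite level $\ZZ/\ell^n$ and passes to the limit using finiteness of the groups; your direct $\ZZ_\ell$-sheaf formulation needs the continuous-cohomology formalism, but this is a minor point.)

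Part (a), however, has a genuine gap. Your reduction is: rational degeneration (Deligne) plus torsion-freeness of the $E_2$-page for $\ell\gg 0$ implies integral degeneration. The logic is fine, but the premise you invoke --- ``Gabber applied to $\CY_{k^s}$ together with torsion-freeness of the stalks'' --- is not available. Gabber's torsion theorem concerns constant coefficients on smooth \emph{proper} varieties over an algebraically closed field; here the base $\CY_{k^s}$ is affine and the coefficients $R^q\pi_*\ZZ_\ell(1)$ are non-constant lisse sheaves, and torsion-freeness of the stalks does not pass to $H^p(\CY_{k^s},R^q\pi_*\ZZ_\ell(1))$ (torsion in such groups reflects mod-$\ell$ monodromy phenomena, and no uniform-in-$\ell$ vanishing is known or proved here; the paper's Proposition on torsion only treats constant coefficients in degrees $\le 2$). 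The paper avoids this entirely: it applies Deligne's Lefschetz degeneration criterion (\cite[Prop.~2.4]{Del1}) directly with $\ZZ_\ell(1)$-coefficients, where the required Lefschetz condition is the \emph{integral} hard Lefschetz isomorphism supplied by Gabber's theorem for $\ell\gg 0$; by lisseness of the $R^q\pi_*$ and proper base change it suffices to check this on a single closed fiber, which is smooth projective, so Gabber applies legitimately there. If you replace your torsion-freeness step by this fiberwise integral Lefschetz argument, part (a) goes through; as written, it does not.
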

\begin{proof}
To prove $(a)$, we will use Deligne's Lefschetz criteria for degeneration of spectral sequences (cf.\cite[Prop. 2.4]{Del1}). Let $u\in H^2(\CX,\ZZ_\ell(1))$ be the $\ell$-adic chern class of the ample line sheaf $\CO(1)$ associated to the projective morphism $\pi$. It suffices to check $u$ and $\ZZ_\ell(1)$ satisfies the Lefschetz condition. Since $R^q\pi_*\ZZ_\ell(1)$ is lisse for all $q$, by the proper base change theorem, it suffices to check the Lefschetz condition for a single fiber of $\pi$ over a closed point $y\in \CY$. Thus, we might assume that $\CY=\Spec k$. Let $m$ denote $\dim (\CX)$.  By Gabber's Theorem \cite[\S 6]{Gab}, the Lefschetz morphism
$$H^{m-i}(\CX_{k^s},\ZZ_\ell(1))\lra H^{m+i}(\CX_{k^s},\ZZ_\ell(i+1))
$$ 
induced by $u^i$ is an isomorphism for $0\leq i\leq m$ and $\ell \gg 0$. This proves the first claim.

For $(b)$, let $j:\Spec K\rightarrow \CY$ be the generic point. Since $\pi$ is smooth and proper, $R^2\pi_*\ZZ/\ell^n(1)$ is a locally constant sheaf for all $n\geq 1$. Thus,we have
$$R^2\pi_*\ZZ/\ell^n(1)\cong j_*j^*R^2\pi_*\ZZ/\ell^n(1).$$
By the spectral sequence
$$
E_2^{p,q}=H^p(\CY,R^qj_*j^*R^2\pi_*\ZZ/\ell^n(1))\Rightarrow H^{p+q}(\Spec K, j^*R^2\pi_*\ZZ/\ell^n(1)),
$$
we get an exact sequence
$$                                    
0\lra H^1(\CY,j_*j^*R^2\pi_*\ZZ/\ell^n(1))\lra H^1(K, j^*R^2\pi_*\ZZ/\ell^n(1)) \lra H^0(\CY,R^1j_*j^*R^2\pi_*\ZZ/\ell^n(1)).
$$
Thus, there is an exact sequence
$$0\lra H^1(\CY, R^2\pi_*\ZZ/\ell^n(1))\lra H^1(K,H^2(X_{K^s}, \ZZ/\ell^n(1)))\lra \prod_{y\in \CY}H^1(K_{y}^{sh},H^2(X_{K^s}, \ZZ/\ell^n(1))).
$$
Taking limit, we get $(b)$.

For $(c)$, it suffices to show that the third term of the sequence in $(b)$ has no torsion for $\ell \gg 0$. Since $R^2\pi_*\ZZ_\ell(1)$ is lisse, the action of $\Gal(K^s/K^{sh}_y)$ on $H^2(X_{K^s},\ZZ_\ell(1))$ is trivial. Thus,
$$H^1_{cts}(K_y^{sh},H^2(X_{K^s},\ZZ_\ell(1)))=\Hom_{cts}(\Gal(K^s/K^{sh}_y),H^2(X_{K^s},\ZZ_\ell(1))).$$
So the claim follows from the fact $H^2(X_{K^s},\ZZ_\ell(1))[\ell]=0$ for $\ell \gg 0$.

$(d)$ follows from the Hochschild-Serre spectral sequence
$$E_2^{p,q}=H^p_{cts}(G_k,H^q(\CY_{k^s}, R^2\pi_*\ZZ_\ell(1)))\Rightarrow H^{p+q}(\CY, R^2\pi_*\ZZ_\ell(1))$$
and the fact $H^1_{cts}(G_k, M)=M_{G_k}$.
\end{proof}
\subsection{Compatible system of $G_k$-modules}
By Lemma \ref{spread} $(c)$ and $(d)$, to prove (\ref{key}), it suffices to show 
$$ H^0(\CY_{k^s}, R^2\pi_*\ZZ_\ell(1))_{G_k}[\ell]=0 \ \mathrm{and} \ H^1(\CY_{k^s}, R^2\pi_*\ZZ_\ell(1))^{G_k}[\ell]=0 \ \mathrm{for} \ \ell \gg 0 .$$

Let $k$ be a finite field of characteristic $p>0$ and $I$ be the set of primes different from $p$. Let $F\in G_k$ be the geometric Frobenius element. Let $M=(M_\ell, \ell\in I)$ be a family of finitely generated $\ZZ_\ell$ -modules equipped with continuous $G_k$-actions. We say that $M=(M_\ell, \ell\in I)$ is a \emph{compatible system of $G_k$-modules} if
there exists a polynomial $P(T)\in \QQ[T]$ with $P(1)\neq 0$ such that $(F-1)P(F)$ kills all $M_\ell\otimes_{\ZZ_\ell}\QQ_\ell$. Let $N=(N_\ell,\ell \in I)$ be a family of $G_k$-modules. We say that $N$ is a system of submodules (resp. quotient modules) of $M$ if $N_\ell$ is a $G_k$-submodule (resp. $G_k$-quotient modules) of $M_\ell$ for all $\ell \in I$. It is obvious that a system of submodules (resp. quotient modules) of a compatible system of $G_k$-modules is also a compatible system.

We will show that $H^0(\CY_{k^s}, R^2\pi_*\ZZ_\ell(1)), \ell \in I$ and 
$H^1(\CY_{k^s}, R^2\pi_*\ZZ_\ell(1)), \ell \in I$ are compatible systems of $G_k$-modules.
\begin{lem}\label{comp}
Let $M$ be a compatible system of $G_k$-modules and $f_\ell:M_\ell^{G_k} \rightarrow (M_\ell)_{G_k}$ be the map induced by the identity. Then $f$ has a finite kernel and a finite cokernel and
$$ |\Ker(f_\ell)|=|\Coker(f_\ell)| \ \mathrm{for} \ \ell \gg 0.
$$
Moreover, if $1$ is not an eigenvalue of $F$ on $M_\ell\otimes_{\ZZ_\ell}\QQ_\ell$ for all $\ell \in I$, then
$$|M_\ell^{G_k}|=|(M_\ell)_{G_k}|<\infty \ \mathrm{for} \ \ell \gg 0.$$

\end{lem}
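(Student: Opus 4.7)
Since $k$ is finite, $G_k$ is procyclic, generated by the geometric Frobenius $F$, so $M_\ell^{G_k} = \Ker(F-1\colon M_\ell \to M_\ell)$ and $(M_\ell)_{G_k} = \Coker(F-1)$. The plan is to exploit the decomposition afforded by the common annihilating polynomial $(T-1)P(T)$. Since $P(1) \ne 0$, B\'ezout in $\mathbb{Q}[T]$ produces $A, B \in \mathbb{Q}[T]$ with $A(T)(T-1) + B(T)P(T) = 1$. Setting $V_\ell = M_\ell \otimes_{\mathbb{Z}_\ell} \mathbb{Q}_\ell$, the operators $\pi_1 = B(F)P(F)$ and $\pi_2 = A(F)(F-1)$ are orthogonal idempotents on $V_\ell$ giving $V_\ell = V_\ell^{F=1} \oplus V_\ell^{P(F)=0}$. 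Since $\mathbb{Q}_\ell$ is flat over $\mathbb{Z}_\ell$, tensoring commutes with $\Ker$ and $\Coker$, so $M_\ell^{G_k} \otimes \mathbb{Q}_\ell \cong V_\ell^{F=1}$ and $(M_\ell)_{G_k} \otimes \mathbb{Q}_\ell \cong V_\ell/(F-1)V_\ell \cong V_\ell^{F=1}$, and $f_\ell \otimes \mathbb{Q}_\ell$ becomes the identity. Hence $\Ker(f_\ell)$ and $\Coker(f_\ell)$ are $\mathbb{Z}_\ell$-torsion submodules of finitely generated $\mathbb{Z}_\ell$-modules, and are therefore finite.

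To compare orders, let $N$ clear the denominators of $A, B$, set $T_\ell = (M_\ell)_{\tor}$, and let $L_\ell = M_\ell/T_\ell$. For $\ell \nmid N$ the projectors act integrally on $L_\ell$; since $L_\ell$ is torsion-free and embeds in $V_\ell$, the relations $\pi_i^2 = \pi_i$ and $\pi_1\pi_2 = 0$ hold exactly on $L_\ell$, producing a clean direct sum $L_\ell = L_\ell^{(1)} \oplus L_\ell^{(2)}$ with $F = 1$ on $L_\ell^{(1)}$ and $F-1$ invertible on $L_\ell^{(2)} \otimes \mathbb{Q}_\ell$. In particular $L_\ell^{G_k} = L_\ell^{(1)}$ and $(L_\ell)_{G_k} = L_\ell^{(1)} \oplus (L_\ell^{(2)})_{G_k}$ with $(L_\ell^{(2)})_{G_k}$ finite. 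The six-term cohomology sequence attached to $0 \to T_\ell \to M_\ell \to L_\ell \to 0$ splices, at the connecting map $\delta\colon L_\ell^{G_k} \to (T_\ell)_{G_k}$, into two exact sequences
$$0 \to T_\ell^{G_k} \to M_\ell^{G_k} \to \Ker(\delta) \to 0, \qquad 0 \to (T_\ell)_{G_k}/\Im(\delta) \to (M_\ell)_{G_k} \to (L_\ell)_{G_k} \to 0,$$
and the snake lemma applied to the induced diagram, with vertical maps $\bar f_T$ (i.e. $f_T$ followed by the quotient by $\Im(\delta)$), $f_\ell$, and $g := f_L|_{\Ker(\delta)}$ (injective by the description above), yields $\Ker(f_\ell) \cong \Ker(\bar f_T)$ together with $0 \to \Coker(\bar f_T) \to \Coker(f_\ell) \to \Coker(g) \to 0$. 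Counting inside the finite group $T_\ell$ gives $|T_\ell^{G_k}| = |(T_\ell)_{G_k}|$, whence $|\Ker(\bar f_T)| = |\Coker(\bar f_T)| \cdot |\Im(\delta)|$, while $|\Coker(g)| = |\Im(\delta)| \cdot |(L_\ell^{(2)})_{G_k}|$. The two factors of $|\Im(\delta)|$ cancel, producing
$$|\Ker(f_\ell)|/|\Coker(f_\ell)| = 1/|(L_\ell^{(2)})_{G_k}|.$$

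It remains to show $|(L_\ell^{(2)})_{G_k}| = 1$ for $\ell \gg 0$. Since $L_\ell^{(2)}$ is a finitely generated free $\mathbb{Z}_\ell$-module on which $F-1$ is rationally invertible (hence injective), the standard index formula gives $|(L_\ell^{(2)})_{G_k}| = |\det(F-1 \mid L_\ell^{(2)} \otimes \mathbb{Q}_\ell)|_\ell^{-1}$. The eigenvalues of $F$ on $V_\ell^{P(F)=0}$ lie in the fixed finite set of roots $\beta_1, \dots, \beta_r \in \overline{\mathbb{Q}}$ of $P$, so the determinant equals $\pm \prod_i (\beta_i - 1)^{c_{i,\ell}}$ for some nonnegative integers $c_{i,\ell}$. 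For $\ell$ outside the finite set of rational primes that divide $N$ or lie below the support of some $\beta_i - 1$, every place $v$ of $\overline{\mathbb{Q}}$ above $\ell$ satisfies $|\beta_i - 1|_v = 1$, so the determinant is an $\ell$-adic unit uniformly in the multiplicities $c_{i,\ell}$. The moreover clause drops out: when $1$ is not an eigenvalue, $L_\ell^{(1)} = 0$ forces $\delta = 0$, whence $M_\ell^{G_k} = T_\ell^{G_k}$ and $(M_\ell)_{G_k} = (T_\ell)_{G_k} \oplus (L_\ell^{(2)})_{G_k}$ are both finite, and the same determinant argument yields $|M_\ell^{G_k}| = |(M_\ell)_{G_k}|$ for $\ell \gg 0$. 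The hard part is the middle paragraph: one must track the potentially nonzero connecting map $\delta$ through the snake diagram carefully and observe that its contribution cancels in the final ratio, reducing the entire question to the clean determinant computation on the $\ell$-adic lattice $L_\ell^{(2)}$.
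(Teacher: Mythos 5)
Your proof is correct, and it reaches the same numerical endgame as the paper by a more self-contained route. The paper obtains the finiteness of $\Ker(f_\ell)$, $\Coker(f_\ell)$ and the formula $|\Ker(f_\ell)|/|\Coker(f_\ell)|=|R_\ell(0)|_\ell$ in one stroke by citing Tate's lemma \cite[\S 5, Lem. z.4]{Tat3} applied to $\theta_\ell=F-1$, where $\det(T-\theta_\ell\otimes 1)=T^{\rho_\ell}R_\ell(T)$; the compatibility hypothesis enters only twice there, first to ensure the generalized $1$-eigenspace of $F$ equals $\Ker(F-1)$ (so $\rho_\ell=\rank_{\ZZ_\ell}\Ker\theta_\ell$ and Tate's lemma applies), and second to see that $R_\ell(0)$ is a product of roots of $P(T+1)$, hence an $\ell$-adic unit for $\ell\gg 0$. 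You instead reprove the content of Tate's lemma in the needed case: the B\'ezout idempotents give the eigenspace statement and, for $\ell$ prime to the denominators, split the torsion-free quotient $L_\ell$ integrally; the snake-lemma d\'evissage through $T_\ell=(M_\ell)_{\tor}$ shows the ratio equals $|(L_\ell^{(2)})_{G_k}|^{-1}=|\det(F-1\mid L_\ell^{(2)}\otimes_{\ZZ_\ell}\QQ_\ell)|_\ell$; and your closing step (the eigenvalues are roots of the fixed $P$, so the determinant is a unit at all but finitely many $\ell$) is exactly the paper's observation about $P(T+1)$. Your version buys independence from the external citation and makes visible where each hypothesis is used, at the cost of the bookkeeping with the connecting map $\delta$, which the citation hides; your treatment of that bookkeeping, including the cancellation of $|\Im(\delta)|$ and the injectivity of $g$, checks out. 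Two cosmetic points: $N$ should also clear the denominators of $P$, not just of $A,B$, so that $\pi_1=B(F)P(F)$ acts integrally; and in the moreover clause $(M_\ell)_{G_k}$ is only an extension of $(L_\ell^{(2)})_{G_k}$ by $(T_\ell)_{G_k}$ rather than a direct sum, but the orders still multiply, so the conclusion is unaffected.
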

\begin{proof}
Let $\theta_\ell$ denote the endomorphsim $F-1$ on $M_\ell$ and $\theta_\ell\otimes 1$ denote the corresponding endomorphsim of $M_\ell\otimes_{\ZZ_\ell}\QQ_\ell$. Write $\det(T-\theta_\ell\otimes 1)=T^{\rho_\ell} R_\ell(T)$ with $R_\ell(0)\neq 0$. By the assumption, there exists a polynomial $P(T)\in \QQ[T]$ with $P(1)\neq 0$ such that $(F-1)P(F)$ kills all $M_\ell\otimes_{\ZZ_\ell}\QQ_\ell$. Thus, the generalized $1$-eigenspace of $F$ on $M_\ell\otimes_{\ZZ_\ell}\QQ_\ell$ is equal to $\Ker(F-1)$. Thus, $\rho_\ell=\rank_{\ZZ_\ell}(\Ker(\theta_\ell))$. By \cite[\S 5, Lem. z.4]{Tat3}, $f_\ell$ has a finite kernel and a finite cokernel and
$$|\Ker(f_\ell)|/|\Coker(f_\ell)|=|R_\ell(0)|_\ell.$$
$R_\ell(0)$ is equal to the product of roots of $R_\ell(T)$ and all roots of $R_\ell(T)$ are roots of $P(T+1)$. Since all roots of $P(T+1)$ have $\ell$-adic absolute values $1$ for $\ell\gg 0$, so $|R_\ell(0)|_\ell=1$ for $\ell \gg 0$. This completes the proof of the first statement.

For the second statement, under the assumption, $f_\ell$ is a map between two finite groups. Then the claim follows from the first statement and the following exact sequence of finite abelian groups
$$0\lra \Ker(f_\ell) \lra M_\ell^{G_k} \lra (M_\ell)_{G_k}\lra \Coker(f_\ell) \lra 0.
$$

\end{proof}
\begin{lem}\label{lem1}
Assume that $\Br(X_{K^s})^{G_K}(\ell)$ is finite for all $\ell \in I$. Then 
$H^0(\CY_{k^s}, R^2\pi_*\ZZ_\ell(1)), \ell \in I$ is a compatible system of $G_k$-modules. Moreover,
$$H^0(\CY_{k^s}, R^2\pi_*\ZZ_\ell(1))_{G_k}[\ell]=0 \ \mathrm{for} \ \ell \gg 0 .
$$
\end{lem}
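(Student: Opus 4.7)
The plan is to identify $H^0(\CY_{k^s},R^2\pi_*\QQ_\ell(1))$ with $H^2(X_{K^s},\QQ_\ell(1))^{G_{Kk^s}}$, where $G_{Kk^s}:=\Gal(K^s/Kk^s)$ is the kernel of $G_K\twoheadrightarrow G_k$; split this using Proposition \ref{prop2.1}; extract a universal polynomial annihilator from Deligne's purity together with Kronecker's theorem; and then apply Lemma \ref{comp} to obtain the $\ell$-torsion vanishing.

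Since $\pi$ is smooth and proper, $R^2\pi_*\ZZ_\ell(1)$ is lisse with stalk $H^2(X_{K^s},\ZZ_\ell(1))$, which justifies the identification above with $G_k$-action induced by $G_K/G_{Kk^s}=G_k$. Taking $G_{Kk^s}$-invariants of the split short exact sequence of $G_K$-modules from Proposition \ref{prop2.1} yields the $G_k$-equivariant decomposition
\[
H^0(\CY_{k^s},R^2\pi_*\QQ_\ell(1))\;=\;(\NS(X_{K^s})\otimes_\ZZ\QQ_\ell)^{G_{Kk^s}}\,\oplus\,V_\ell\Br(X_{K^s})^{G_{Kk^s}}.
\]
Because $G_K$ acts on the finitely generated group $\NS(X_{K^s})$ through a finite quotient of order some fixed $N$, the Frobenius $F$ acts semisimply on the first summand with eigenvalues in the group of $N$-th roots of unity; on the second summand, the finiteness of $\Br(X_{K^s})^{G_K}(\ell)$ forces $V_\ell\Br(X_{K^s})^{G_K}=0$, so $F$ has no $1$-eigenvector there. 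Thus $F$ is semisimple at eigenvalue $1$ on the whole $H^0$. By Deligne's purity theorem, $R^2\pi_*\QQ_\ell(1)$ is pure of weight $0$, so $H^0(\CY_{k^s},R^2\pi_*\QQ_\ell(1))$ is pure of weight $0$; by Kronecker's theorem, its Frobenius eigenvalues are therefore roots of unity, of degree over $\QQ$ bounded by the $\ell$-independent Betti number $b_2(X):=\dim_{\QQ_\ell}H^2(X_{K^s},\QQ_\ell(1))$. Consequently the non-$1$ eigenvalues lie in a finite set $S$ of roots of unity depending only on $X$, and $P(T):=\prod_{\zeta\in S}(T-\zeta)^{b_2(X)}\in\QQ[T]$ satisfies $P(1)\neq 0$ and $(F-1)P(F)=0$ on $H^0(\CY_{k^s},R^2\pi_*\QQ_\ell(1))$. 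Hence $M_\ell:=H^0(\CY_{k^s},R^2\pi_*\ZZ_\ell(1))$ is a compatible system of $G_k$-modules.

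For the $\ell$-torsion vanishing, the semisimplicity of $F$ at $1$ implies $(F-1)M_\ell$ lies in the sum of the generalized eigenspaces of $F$ for eigenvalues $\neq 1$, which meets the $1$-eigenspace $M_\ell^{G_k}$ trivially; so $\Ker f_\ell=0$. Lemma \ref{comp} then gives $|\Coker f_\ell|=|\Ker f_\ell|=1$ for $\ell\gg 0$, so $f_\ell\colon M_\ell^{G_k}\to (M_\ell)_{G_k}$ is an isomorphism. For $\ell\gg 0$, $M_\ell$ is torsion-free (as a submodule of the stalk $H^2(X_{K^s},\ZZ_\ell(1))$, torsion-free by Gabber's theorem), hence so is $M_\ell^{G_k}$, and the isomorphism transports torsion-freeness to $(M_\ell)_{G_k}$; in particular $(M_\ell)_{G_k}[\ell]=0$ for $\ell\gg 0$. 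The main obstacle is the construction of the universal polynomial $P(T)$ in the second paragraph, which requires the confluence of Deligne's purity for smooth proper morphisms, Kronecker's theorem, the $\ell$-independence of Betti numbers, and the Brauer finiteness hypothesis to eliminate the $1$-eigenvalue on the Brauer summand.
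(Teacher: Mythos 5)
The first half of your argument --- identifying $H^0(\CY_{k^s},R^2\pi_*\QQ_\ell(1))$ with $H^2(X_{K^s},\QQ_\ell(1))^{G_{Kk^s}}$, splitting it via Proposition \ref{prop2.1}, and deducing from $V_\ell\Br(X_{K^s})^{G_K}=0$ that the generalized $1$-eigenspace of $F$ equals $\Ker(F-1)$ --- agrees with the paper. The gap is in your construction of the $\ell$-independent annihilating polynomial $P(T)$, i.e.\ in the compatible-system property itself, which is the heart of the lemma. Purity does give that the Frobenius eigenvalues on $W_\ell:=H^0(\CY_{k^s},R^2\pi_*\QQ_\ell(1))$ are algebraic numbers all of whose archimedean conjugates have absolute value $1$, but Kronecker's theorem applies only to algebraic \emph{integers}; here the eigenvalues have the form $\alpha/q$ with $\alpha$ a weight-$2$ Weil integer, so they generally have denominators at $p$ and are \emph{not} roots of unity. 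Concretely, take the constant family $\CX=X_0\times_k\CY$ with $X_0=E_1\times E_2$ a product of ordinary, non-isogenous (over $\bar k$) elliptic curves: the hypothesis of the lemma holds by Skorobogatov--Zarhin, $W_\ell=H^2((X_0)_{k^s},\QQ_\ell(1))$, and $F$ has the eigenvalue $\alpha_1\alpha_2/q$, which is a root of unity only if $E_1,E_2$ become isogenous over a finite extension. So your finite set $S$ and the polynomial $\prod_{\zeta\in S}(T-\zeta)^{b_2(X)}$ do not exist as constructed. (Even granting roots of unity, bounding their degree over $\QQ$ by $b_2(X)$ would already require rationality/$\ell$-independence of the characteristic polynomial, which is exactly the point at issue.)

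The paper obtains $P(T)$ differently: by Lemma \ref{spread}(a) the space $W_\ell$ is a subquotient of $H^2(\CX_{k^s},\QQ_\ell(1))$, and Proposition \ref{lindep} (proved via de Jong's alterations and d\'evissage to Deligne's Weil I) provides one nonzero $P(T)\in\QQ[T]$ with $P(F)$ killing $H^2(\CX_{k^s},\QQ_\ell(1))$ for all $\ell\neq p$; writing $P(T)=(T-1)^mP_1(T)$ with $P_1(1)\neq 0$ and using your (correct) identification of the generalized $1$-eigenspace with $\Ker(F-1)$ then yields the compatible system. A repair closer to your specialization idea is also possible: $W_\ell$ injects, equivariantly for the decomposition group of a closed point $y\in\CY$, into the stalk $H^2(X_{\bar y},\QQ_\ell(1))$, whose Frobenius characteristic polynomial lies in $\QQ[T]$ and is independent of $\ell$ by Weil I; the eigenvalues of $F$ on $W_\ell$ are then among the $\deg(y)$-th roots of the roots of this fixed polynomial, a finite, $\ell$-independent, Galois-stable set, from which a usable $P(T)$ can be built. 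Your final paragraph (vanishing of $\Ker(f_\ell)$ for $\ell\gg 0$ using torsion-freeness of $M_\ell$, Lemma \ref{comp} to kill the cokernel, and transport of torsion-freeness to $(M_\ell)_{G_k}$) matches the paper's argument, but it is conditional on the compatible-system step being fixed.
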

\begin{proof}
Let $K^\prime$ denote $Kk^s$. Since $R^2\pi_*\QQ_\ell(1)$ is lisse, we have 
$$H^0(\CY_{k^s}, R^2\pi_*\QQ_\ell(1))=H^2(X_{K^s},\QQ_\ell(1))^{G_{K^\prime}}.$$
By Proposition \ref{prop2.1}, there is a split exact sequences of $G_k$-representations
$$0\lra (\NS(X_{K^s})\otimes_{\ZZ}\QQ_\ell)^{G_{K^\prime}}\lra H^2(X_{K^s}, \QQ_\ell(1))^{G_{K^\prime}}\lra V_\ell\Br(X_{K^s})^{G_{K^\prime}}\lra 0.
$$
Write $W_\ell$ for $H^2(X_{K^s}, \QQ_\ell(1))^{G_{K^\prime}}$. By the assumption, $V_\ell\Br(X_{K^s})^{G_K}=0$. Thus, $W_\ell^{G_k}$ is equal to $(\NS(X_{K^s})\otimes_{\ZZ}\QQ_\ell)^{G_{K}}$ which is a direct summand of $W_\ell$ as $G_k$-representations. This implies that the generalized $1$-eigenspace of $F$ on $W_\ell$ is equal to $\Ker(F-1)$. By Lemma \ref{spread} $(a)$, $W_\ell$ is a subquotient of $H^2(\CX_{k^s},\QQ_\ell(1))$. By Proposition \ref{lindep} below, there exists a non-zero polynomial $P(T)\in \QQ[T]$ such that $P(F)$ kills all $W_\ell$. Write $P(T)=(T-1)^mP_1(T)$ with $P(1)\neq 0$. Then $(F-1)P_1(F)$ kills all $W_\ell$. This proves the first claim.

For the second claim, set $M_\ell=H^0(\CY_{k^s}, R^2\pi_*\ZZ_\ell(1))$. $M_\ell\cong H^2(X_{K^s},\ZZ_\ell(1))^{G_{K^\prime}}$, so $(M_\ell)_{tor}=0$ for $\ell \gg 0$. Let $f_\ell:M_\ell^{G_k} \rightarrow (M_\ell)_{G_k}$ be the map in Lemma \ref{comp}. By Lemma \ref{comp}, $\Ker(f_\ell)$ is a finite subgroup of $M_\ell$, thus $\Ker(f_\ell)=0$ for $\ell \gg 0$. By Lemma \ref{comp}, this implies $\Coker(f_\ell)=0$ for $\ell \gg 0$. So $f_\ell$ is an isomorphism for $\ell \gg 0$. Thus, $(M_\ell)_{G_k}[\ell]\cong M_\ell^{G_k}[\ell]=0$ for $\ell \gg 0$.

\end{proof}

\begin{lem}\label{lem2}
Let $H^1(\CY_{k^s}, R^2\pi_*\ZZ_\ell(1))$ as in Lemma \ref{spread}. Then
$$H^1(\CY_{k^s}, R^2\pi_*\ZZ_\ell(1))^{G_k}[\ell]=0 \ \mathrm{for} \ \ell \gg 0.
$$
\end{lem}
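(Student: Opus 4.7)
The strategy parallels Lemma \ref{lem1}. Write $M_\ell := H^1(\CY_{k^s},R^2\pi_*\ZZ_\ell(1))$. The plan is: (i) exhibit $\{M_\ell\}_{\ell \in I}$ as a compatible system of $G_k$-modules in which $1$ is not a Frobenius eigenvalue, so that Lemma \ref{comp} applies to give $|M_\ell^{G_k}|<\infty$ for $\ell\gg 0$; and (ii) show $M_\ell$ itself is torsion-free for $\ell\gg 0$, whence the finite torsion-free $\ZZ_\ell$-module $M_\ell^{G_k}$ must vanish and $M_\ell^{G_k}[\ell]=0$ follows.

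For (i), the degeneration in Lemma \ref{spread}(a) realizes $M_\ell\otimes_{\ZZ_\ell}\QQ_\ell$ as a $G_k$-subquotient of $H^3(\CX_{k^s},\QQ_\ell(1))$ for $\ell\gg 0$, and Proposition \ref{lindep} produces a single nonzero polynomial $P\in\QQ[T]$ such that $P(F)$ kills every $H^3(\CX_{k^s},\QQ_\ell(1))$, hence every $M_\ell\otimes_{\ZZ_\ell}\QQ_\ell$. To ensure $P(1)\neq 0$ I would invoke Deligne's Weil II: since $\CY$ is smooth and $R^2\pi_*\QQ_\ell(1)$ is lisse and pure of weight $0$, the group $H^1(\CY_{k^s},R^2\pi_*\QQ_\ell(1))$ is mixed of weights $\geq 1$, so $1$ cannot be a Frobenius eigenvalue on $M_\ell\otimes_{\ZZ_\ell}\QQ_\ell$. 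Dividing out any $(T-1)^m$ factor from $P$ then yields the required compatibility condition, and the second statement of Lemma \ref{comp} gives the finiteness of $M_\ell^{G_k}$ for $\ell\gg 0$.

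For (ii), I would strengthen Lemma \ref{spread}(a) to an integral splitting. Gabber's integral hard Lefschetz (for $\ell\gg 0$) allows the primitive decomposition behind Deligne's Lefschetz criterion to be carried out over $\ZZ_\ell$, producing a canonical decomposition $R\pi_*\ZZ_\ell(1)\cong\bigoplus_i R^i\pi_*\ZZ_\ell(1)[-i]$ in $D^b_c(\CY,\ZZ_\ell)$ and hence realizing $M_\ell$ as a $G_k$-direct summand of $H^3(\CX_{k^s},\ZZ_\ell(1))$. Choosing a smooth compactification $\overline{\CX}\supseteq\CX$ with strict normal crossings boundary and applying Gabber's torsion-freeness theorem to $\overline{\CX}$ and to the strata of the boundary, the Gysin spectral sequence then transfers torsion-freeness to $H^*(\CX_{k^s},\ZZ_\ell(1))$ for $\ell\gg 0$. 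Thus $M_\ell$, as a direct summand, is torsion-free for $\ell\gg 0$, and combined with (i) we conclude $M_\ell^{G_k}=0$ and in particular $M_\ell^{G_k}[\ell]=0$.

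The hard part is step (ii): upgrading the $E_2$-degeneration of Lemma \ref{spread}(a) to an integral splitting, and proving integral torsion-freeness of the $\ell$-adic cohomology of the smooth but non-proper variety $\CX$. Both require extensions of Gabber's theorems, together with careful control over the various finite sets of exceptional primes so that they can all be absorbed into the single threshold $\ell\gg 0$.
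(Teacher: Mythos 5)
Your step (i) is fine and matches the spirit of the paper (weights $\geq 1$ via Weil II, Proposition \ref{lindep}, Lemma \ref{comp}), but the whole weight of your argument rests on step (ii), the torsion-freeness of $M_\ell=H^1(\CY_{k^s},R^2\pi_*\ZZ_\ell(1))$ for $\ell\gg 0$, and that step has genuine gaps. First, you choose ``a smooth compactification $\overline{\CX}\supseteq\CX$ with strict normal crossings boundary'': resolution of singularities is not available in characteristic $p$ (the paper even remarks after Theorem \ref{noncomp} that assuming resolution would simplify matters), so at best you can pass to a de Jong alteration, which must then be handled via Lemma \ref{alter}-type degree arguments. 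Second, and more seriously, even granting an SNC compactification, the claim that ``the Gysin spectral sequence transfers torsion-freeness to $H^*(\CX_{k^s},\ZZ_\ell(1))$'' is false as stated: the $E_1$-terms being torsion-free does not make the abutment torsion-free, because differentials between torsion-free $\ZZ_\ell$-modules can have cokernels with torsion, and without an integral structure independent of $\ell$ there is no way to rule this out for $\ell\gg 0$. Indeed the paper itself only proves torsion-freeness for $H^i$, $i\leq 2$, of smooth non-proper varieties (Proposition \ref{notor}, using the Picard/Brauer description of $H^2$), precisely because degree-$3$ torsion-freeness for open varieties is not accessible this way. Third, the integral decomposition $R\pi_*\ZZ_\ell(1)\cong\bigoplus_i R^i\pi_*\ZZ_\ell(1)[-i]$ is asserted rather than proved; Deligne's splitting construction involves denominators and upgrading it to $\ZZ_\ell$ for $\ell\gg 0$ is a nontrivial claim that would itself need an argument.

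The paper avoids all of this. It only uses the $E_2$-degeneration of Lemma \ref{spread}(a) to identify $M_\ell$ with the graded piece $(H^3_\ell)^1/(H^3_\ell)^2$ of the Leray filtration on $H^3_\ell=H^3(\CX_{k^s},\ZZ_\ell(1))$ (no splitting needed), and then proves the much stronger statement $M_\ell^{G_k}=0$ for $\ell\gg 0$: Theorem \ref{bigthm} gives $(H^3_\ell)^{G_k}=0$ for $\ell\gg 0$ (vanishing of Frobenius invariants of $H^3$ of a smooth, possibly non-proper, variety, proved via alterations, purity/semi-purity, Gabber's theorem on the proper model, and Proposition \ref{notor}); hence $((H^3_\ell)^1)^{G_k}=((H^3_\ell)^2)^{G_k}=0$, and Lemma \ref{comp} (weights $\geq 1$) converts the invariant-vanishing for the submodule $(H^3_\ell)^2$ into coinvariant-vanishing $((H^3_\ell)^2)_{G_k}=0$; the four-term sequence $0\lra ((H^3_\ell)^2)^{G_k}\lra ((H^3_\ell)^1)^{G_k}\lra M_\ell^{G_k}\lra ((H^3_\ell)^2)_{G_k}$ then forces $M_\ell^{G_k}=0$. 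The key insight you are missing is that one should aim at vanishing of $G_k$-invariants (Theorem \ref{bigthm}) rather than integral torsion-freeness of $H^3$ of the open total space, which is exactly the statement your route cannot deliver. To repair your proposal you would need to either prove that torsion-freeness claim (not currently available) or switch to an invariants-based argument along the paper's lines.
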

\begin{proof}
Write $H^3_\ell$ for $H^3(\CX_{k^s},\ZZ_\ell(1))$ in Lemma \ref{spread} $(a)$. And let $0 \subseteq (H^3_\ell)^3\subseteq (H^3_\ell)^2\subseteq(H^3)^1\subseteq (H^3_\ell)^0=H^3_\ell$ be the filtration induced by the spectral sequence in Lemma \ref{spread} $(a)$. Since the spectral sequence degenerates at $E_2$, we have 
$$H^1(\CY_{k^s}, R^2\pi_*\ZZ_\ell(1))\cong (H^3_\ell)^1/(H_\ell^3)^2.$$
By Proposition \ref{lindep} below, $H^3_\ell, \ell\in I$ is a compatible system of $G_k$-modules. Thus, $(H_\ell^3)^1, \ell \in I$ and $(H_\ell^3)^2, \ell \in I$ \ are also compatible systems of $G_k$-modules. Since $H^3_\ell\otimes_{\ZZ_\ell}\QQ_\ell$ is of weight $\geq 1$, $1$ is not an eigenvalue for $F$. By Lemma \ref{comp}, we have 
$$|((H_\ell^3)^2)_{G_k}|=((H_\ell^3)^2)^{G_k}| \ \mathrm{for} \ \ell \gg 0.
$$
By Theorem \ref{bigthm} below, 
$$
(H^3_\ell)^{G_k} =0\ \mathrm{for} \ \ell \gg 0.
$$
It follows that
$$
((H_\ell^3)^2)^{G_k}=((H_\ell^3)^1)^{G_k}=(H^3_\ell)^{G_k} =0\ \mathrm{for} \ \ell \gg 0.
$$
Thus,
$$
((H_\ell^3)^2)_{G_k}=0 \ \mathrm{for} \ \ell \gg 0.
$$
By the exact sequence
$$0\lra ((H_\ell^3)^2)^{G_k}\lra ((H_\ell^3)^1)^{G_k}\lra ((H^3_\ell)^1/(H_\ell^3)^2)^{G_k} \lra ((H_\ell^3)^2)_{G_k},$$
we can conclude
$$
((H^3_\ell)^1/(H_\ell^3)^2)^{G_k}=0 \ \mathrm{for} \ \ell \gg 0.
$$
\end{proof}
\subsection{Proof of Theorem \ref{thm1}}
Lemma \ref{lem1} and Lemma \ref{lem2} completes the proof of Theorem \ref{thm1}.

\section{Torsions of cohomology of varieties over finite fields}
In this section, we will study the torsion part of $l$-adic cohomology for smooth varieties over finite fields.
\subsection {Alteration}
By de Jong's alteration theorem (cf. \cite{deJ}), every integral variety $X$ over an algebraic closed field $k$ admits a proper and generic finite morphism $f:X_1\rightarrow X$ such that $X_1$ is an open subvariety of a smooth projective connected variety over $k$. The alteration theorem will allow us to reduce questions about general non-compact smooth varieties to smooth varieties with smooth projective compactification. 
\begin{lem}\label{alter}
Let $f:Y\rightarrow X$ be a proper and generic finite morphism between smooth irreducible varieties over an algebraic closed field $k$.  Let $\ell \neq \Char(k)$ be a prime and $d$ denote $[K(Y):K(X)]$. Then the kernels of the pullbacks 
$$H^i(X,\ZZ/\ell^n)\lra H^i(Y,\ZZ/\ell^n)\ \mathrm{and}\ H^i_c(X,\ZZ/\ell^n)\lra H^i_c(Y,\ZZ/\ell^n)$$
are killed by $d$ for all integers $i\geq 0$ and $n\geq 1$. 
\end{lem}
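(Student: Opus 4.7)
My approach is to construct pushforward/trace maps
$$f_*\colon H^i(Y,\ZZ/\ell^n)\lra H^i(X,\ZZ/\ell^n),\quad f_*\colon H^i_c(Y,\ZZ/\ell^n)\lra H^i_c(X,\ZZ/\ell^n),$$
satisfying $f_*\circ f^* = d\cdot\mathrm{id}$. The lemma then follows at once: if $f^*(\alpha)=0$ then $d\alpha = f_*f^*(\alpha)=0$.

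To set up $f_*$, note that generic finiteness plus irreducibility of $X$ and $Y$ forces $\dim X=\dim Y=n$, and both varieties are smooth over the algebraically closed field $k$ with $\ell\neq \Char(k)$. Any purely inseparable factor of $f$ is a universal homeomorphism and so induces isomorphisms on all \'etale cohomology groups in question, so one may reduce to the case where $f$ is generically \'etale of degree $d$. In this setting the Gysin/trace morphism
$$\mathrm{Tr}_f\colon Rf_*\ZZ/\ell^n \lra \ZZ/\ell^n$$
is available for a proper generically \'etale morphism between smooth $k$-varieties of the same dimension (SGA 4 XVIII, or via Grothendieck--Verdier duality). Taking hypercohomology produces $f_*$ on $H^i$; because $f$ is proper one has $Rf_!=Rf_*$, so the same $\mathrm{Tr}_f$ produces $f_*$ on $H^i_c$ as well.

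The identity $f_*\circ f^* = d$ reduces to showing that the composition
$$\ZZ/\ell^n \lra Rf_*\ZZ/\ell^n \xrightarrow{\mathrm{Tr}_f} \ZZ/\ell^n$$
(the first arrow being the adjunction unit) is multiplication by $d$. On the dense open $U\subseteq X$ over which $f$ becomes finite \'etale of degree $d$ this is the classical \'etale trace of the constant section $1$, evaluating to $d$; since $X$ is irreducible and $H^0(X,\ZZ/\ell^n)\hookrightarrow H^0(U,\ZZ/\ell^n)$, the equality extends to all of $X$. Applying $H^i(X,-)$ and $H^i_c(X,-)$ then gives $f_*\circ f^* = d$ on both cohomologies. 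The only real obstacle is invoking the trace morphism in this generality --- proper generically \'etale, not merely finite \'etale --- which is standard machinery and can be cited rather than reconstructed.
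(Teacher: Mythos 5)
Your transfer strategy (construct $f_*$ with $f_*\circ f^*=d\cdot\mathrm{id}$) is sound and is genuinely different in execution from the paper's argument: the paper never builds a sheaf-level trace for $f$, but instead uses Poincar\'e duality on the smooth varieties $X$ and $Y$, compatibly with $f^*$, to reduce both statements (for $H^i$ and $H^i_c$, all $i$) to the single claim that the kernel of $f^*$ on $H^{2m}_c(X,\ZZ/\ell^n(m))$ is killed by $d$; it then shrinks $X$ so that $f$ becomes finite flat (which does not change the top-degree compactly supported group) and identifies that pullback with multiplication by $d$ via the trace isomorphisms $H^{2m}_c(\cdot,\ZZ/\ell^n(m))\cong\ZZ/\ell^n$. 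Your route gets all degrees at once from the single derived-category identity $\mathrm{Tr}_f\circ(\mathrm{unit})=d$, at the price of invoking the full duality formalism ($f^!$ of the constant sheaf, compatibility of the trace with restriction to opens and with the classical trace over the finite locus); the paper's route needs only the elementary degree formula in top degree, duality entering solely through the cup-product pairing.

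There is, however, one step in your write-up that is wrong as stated: the reduction to the generically \'etale case. If $K(Y)/K(X)$ is inseparable, there is in general no factorization of $f$ through a universal homeomorphism. What exists is the separable closure $L$ of $K(X)$ in $K(Y)$, giving $Y\to X'\to X$ with $X'$ the normalization of $X$ in $L$; the generically radicial piece $Y\to X'$ is only \emph{generically} finite (it may contract subvarieties, e.g.\ $Y$ a blow-up of a purely inseparable cover of $X$), hence is not a universal homeomorphism and need not induce isomorphisms on (compactly supported) cohomology, and moreover $X'$ need not be smooth, so the duality trace would not even be available for the remaining factor. (A smaller imprecision: after discarding an inseparable part the finite \'etale locus would have degree the separable degree $d_s$ rather than $d$; that alone would be harmless since $d_s\mid d$.) Fortunately the reduction is unnecessary: the trace morphism via duality exists for any proper generically finite $f$ between smooth $k$-varieties of the same dimension, and over a dense open $U\subseteq X$ you should only assume $f$ finite and \emph{flat} of degree $d$. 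The trace for a finite locally free morphism still satisfies $\mathrm{tr}\circ(\mathrm{unit})=d$ with no \'etaleness hypothesis (SGA 4 XVII 6.2.3; this is also what the paper's proof uses after shrinking), and it agrees with the duality trace over $U$. With that substitution, your argument closes the gap and proves the lemma.
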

\begin{proof}
Let $m$ denote the dimension of $X$. Since $X$ is smooth and irreducible, there is a perfect Poincar\'e duality pairing
$$H^i(X,\ZZ/\ell^n)\times H_c^{2m-i}(X,\ZZ/\ell^n(m)) \lra H^{2m}_c(X,\ZZ/\ell^n(m)).
$$
Since the pullback $f^*$ map is compatible with the above pairing, it suffices to show that the kernel of the pullback map
$$H^{2m}_c(X,\ZZ/\ell^n(m))\lra H^{2m}_c(Y,\ZZ/\ell^n(m))
$$
is killed by $d$. Thus, without loss of generality, we can shrink $X$ such that $f$ is finite flat. Then the above map can be identified with the multiplication by $d$
$$\ZZ/\ell^n \stackrel{d}{\lra}\ZZ/\ell^n$$
through trace maps for $X$ and $Y$. This proves the claim.

\end{proof}
\begin{prop}\label{notor}
Let $X$ be a smooth variety over an algebraic closed field $k$. Then, for $0\leq i\leq 2$,{}
$$H^i(X,\ZZ_\ell)_{tor}=0 \ \mathrm{for} \ \ell \gg 0.
$$
\end{prop}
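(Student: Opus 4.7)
The cases $i=0,1$ follow immediately from the Bockstein sequence of $0\to\ZZ_\ell\xrightarrow{\ell}\ZZ_\ell\to\ZZ/\ell\to 0$. Indeed $H^0(X,\ZZ_\ell)=\ZZ_\ell^{\pi_0(X)}$ is free, and the short exact sequence
\[ 0\to H^0(X,\ZZ_\ell)/\ell\to H^0(X,\ZZ/\ell)\to H^1(X,\ZZ_\ell)[\ell]\to 0 \]
has its left arrow the obvious isomorphism $(\ZZ/\ell)^{\pi_0(X)}\to(\ZZ/\ell)^{\pi_0(X)}$, forcing $H^1(X,\ZZ_\ell)[\ell]=0$ and hence $H^1(X,\ZZ_\ell)_{\tor}=0$ for every $\ell\ne\Char(k)$.

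For $i=2$, assuming $X$ irreducible, my plan is a three-step reduction to a Kummer-sequence computation. First, by de Jong's theorem there is a proper generically finite $f:X_1\to X$ of degree $d$ with $X_1$ an open subvariety of a smooth projective connected variety $\bar X_1$; applying Lemma \ref{alter} to $f$ and passing to the inverse limit, $f^*:H^2(X,\ZZ_\ell)\to H^2(X_1,\ZZ_\ell)$ is injective whenever $\ell\nmid d$, so it suffices to prove $H^2(X_1,\ZZ_\ell)_{\tor}=0$ for $\ell\gg 0$. Second, let $D\subset\bar X_1$ be the union of the codimension-one components of $\bar X_1\setminus X_1$ and set $X_1'':=\bar X_1\setminus D$; then $X_1\subset X_1''$ has closed complement of codimension $\geq 2$, and cohomological purity (vanishing of $H^j_Z$ for $j<2\,\mathrm{codim}\,Z$) gives $H^2(X_1'',\ZZ_\ell)\cong H^2(X_1,\ZZ_\ell)$. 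Third, since $k$ is algebraically closed $\mu_{\ell^n}\cong\ZZ/\ell^n$ as \'etale sheaves, so it is enough to show $H^2(X_1'',\ZZ_\ell(1))_{\tor}=0$.

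For the Kummer computation, the sequence $1\to\mu_{\ell^n}\to\GG_m\xrightarrow{\ell^n}\GG_m\to 1$ produces
\[ 0\to\Pic(X_1'')/\ell^n\to H^2(X_1'',\mu_{\ell^n})\to\Br(X_1'')[\ell^n]\to 0, \]
and taking the inverse limit (exact since Mittag--Leffler is immediate for the surjective system $\{\Pic/\ell^n\}$) gives
\[ 0\to\lim_n\Pic(X_1'')/\ell^n\to H^2(X_1'',\ZZ_\ell(1))\to T_\ell\Br(X_1'')\to 0. \]
The Tate module $T_\ell\Br(X_1'')$ is torsion-free. For the left-hand term, writing $D_1,\ldots,D_r$ for the components of $D$, we have $\Pic(X_1'')=\Pic(\bar X_1)/\langle[D_1],\ldots,[D_r]\rangle$; combined with $0\to\Pic^0(\bar X_1)(k)\to\Pic(\bar X_1)\to\NS(\bar X_1)\to 0$, this yields an extension
\[ 0\to D'\to\Pic(X_1'')\to A'\to 0 \]
with $D'$ a divisible quotient of $\Pic^0(\bar X_1)(k)$ and $A'$ a finitely generated quotient of $\NS(\bar X_1)$. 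Since $D'$ is divisible, $\Pic(X_1'')/\ell^n\cong A'/\ell^n$, so $\lim_n\Pic(X_1'')/\ell^n\cong A'\otimes_\ZZ\ZZ_\ell$, which is torsion-free once $\ell$ exceeds the exponent of the finite group $A'_{\tor}$. Therefore $H^2(X_1'',\ZZ_\ell(1))$ is an extension of torsion-free $\ZZ_\ell$-modules and is itself torsion-free, for $\ell\gg 0$.

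The main obstacle I anticipate lies in the structural analysis of $\Pic(X_1'')$ in the final step: verifying carefully that the image of the divisible group $\Pic^0(\bar X_1)(k)$ inside $\Pic(X_1'')$ remains divisible, and that its quotient is finitely generated. All the remaining ingredients are either elementary or are direct consequences of Lemma \ref{alter}, absolute cohomological purity, and the Kummer sequence.
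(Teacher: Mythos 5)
Your proposal is correct, and for the main case $i=2$ it is essentially the paper's own argument: reduce by de Jong's alteration and Lemma \ref{alter}, use the limit Kummer sequence $0\to\varprojlim\Pic/\ell^n\to H^2(\cdot,\ZZ_\ell(1))\to T_\ell\Br\to 0$, and kill the torsion of the left term via the $\ell$-divisibility of the image of $\Pic^0(\bar X_1)(k)$ together with the finitely generated Néron--Severi quotient. The two deviations are minor: your Bockstein treatment of $i\le 1$ is in fact cleaner than the paper's (which uses purity on the compactified alteration) and gives $H^1(X,\ZZ_\ell)_{\tor}=0$ for every $\ell\neq\Char(k)$; and your intermediate passage to $X_1''$ via semi-purity is harmless but superfluous, since $\bar X_1$ is smooth (hence locally factorial), so $\Pic(\bar X_1)\to\Pic(X_1)$ is already surjective with kernel generated by the codimension-one boundary components, and the paper runs the same analysis of $\Pic$ directly on $X_1$.
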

\begin{proof}
In the case that $X$ is smooth and projective over $k$, this is Gabber's theorem. We might assume that $X$ is irreducible. Then $X$ admits an alteration $f:Y\lra X$ such that $Y$ is an open subvariety of a smooth projective connected variety $\bar{Y}$. By Lemma \ref{alter}, the kernel of
$$H^i(X,\ZZ_\ell)\lra H^i(Y,\ZZ_\ell)$$
is killed by the degree of $f$ and therefore, is injective for $\ell \gg 0$.
So it suffices to prove the claim for $Y$. For $i=0$, the claim is obvious. Let $Z$ denote $\bar{Y}-Y$ and $Z_{i}$ denote the smooth locus of irreducible components of $Z$ with codimension $1$ in $\bar{Y}$. By the theorem of purity (cf. \cite{Fuj}), there is an exact sequence
$$0\lra H^1(\bar{Y},\ZZ_\ell)\lra H^1(Y,\ZZ_\ell)\lra \bigoplus_i H^0(Z_i, \ZZ_\ell(-1)).
$$
Since the first and the third term are torsion-free for $\ell \gg 0$, so $H^1(Y,\ZZ_\ell)_{tor}=0$ for $\ell \gg 0$. This proves the claim for $i=1$. For $i=2$, consider the exact sequence
$$0\lra \Pic(Y)\hat{\otimes}\ZZ_\ell\lra H^2(Y,\ZZ_\ell(1))\lra T_\ell\Br(Y)\lra 0.$$
Since the third term is always torsion free, it suffices to show that
$\Pic(Y)\hat{\otimes}\ZZ_\ell$ is torsion free for $\ell\gg 0$. Let $a:\Pic(\bar{Y})\rightarrow \Pic(Y)$ be the restriction map. Since $\Pic^0(\Bar{Y})$ is $\ell$-divisible for all $\ell \neq \Char(k)$, so $a(\Pic^0(\Bar{Y}))$ is also $\ell$-divisible. Thus
$$\Pic(Y)\hat{\otimes}\ZZ_\ell=\Pic(Y)/a(\Pic^0(\Bar{Y}))\hat{\otimes}\ZZ_\ell.$$
Since $\NS(Y)\rightarrow \Pic(Y)/a(\Pic^0(\Bar{Y}))$ is surjective, $\Pic(Y)/a(\Pic^0(\Bar{Y}))$ is a finitely generated abelian group. Thus, the torsion part of $\Pic(Y)/a(\Pic^0(\Bar{Y}))\hat{\otimes}\ZZ_\ell$ is isomorphic to the $\ell$-primary torsion part of $\Pic(Y)/a(\Pic^0(\Bar{Y}))$, which is zero for $\ell \gg 0$. This completes the proof.

\end{proof}

\subsection{$\ell$-independence }

\begin{prop}\label{lindep}
Let $X$ be a separated scheme of finite type over a finite field $k$. Let $F\in G_k$ be the geometric Frobenius element. Then there exists a non-zero polynomial $P(T)\in \QQ[T]$ such that $P(F)$ kills $H^i_c(X_{k^s},\QQ_\ell)$ for all primes $\ell \neq \Char(k)$ and $i\geq 0$. And the same claim holds for $H^i(X_{k^s},\QQ_\ell)$ when $X$ is smooth over $k$. 
\end{prop}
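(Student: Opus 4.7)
The plan is to argue by induction on $\dim X$, reducing the general case to a smooth projective variety via dévissage and de Jong's alteration, where the result becomes Deligne's theorem (part of the Weil conjectures) on the $\ell$-independence and integrality of the characteristic polynomial of Frobenius on $H^i$. The base case $\dim X=0$ is trivial: $F$ acts through a finite quotient on $H^0_c(X_{k^s},\QQ_\ell)$, and the higher compactly supported cohomology vanishes.

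For the inductive step on the $H^i_c$ statement, I would first reduce to $X$ smooth: since $k$ is perfect, $X_{\mathrm{red}}$ contains a smooth dense open $U$ with closed complement $Z$ of strictly smaller dimension, and the excision long exact sequence
\begin{equation*}
\cdots\lra H^i_c(U_{k^s},\QQ_\ell)\lra H^i_c(X_{k^s},\QQ_\ell)\lra H^i_c(Z_{k^s},\QQ_\ell)\lra\cdots,
\end{equation*}
combined with the elementary fact that killing polynomials multiply along short exact sequences (if $P(F)A=0=Q(F)C$ in $0\to A\to B\to C\to 0$, then $(PQ)(F)B=0$), reduces the problem to $U$ and $Z$; a polynomial for $Z$ is supplied by the induction hypothesis. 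Next, for smooth $X$, de Jong's alteration produces a proper, generically finite $f\colon V\to X$ with $V$ an open subscheme of a smooth projective $\bar V/k$. By Lemma \ref{alter} (passing to $\ZZ_\ell$ and tensoring with $\QQ_\ell$) the pullback $f^*\colon H^i_c(X_{k^s},\QQ_\ell)\hookrightarrow H^i_c(V_{k^s},\QQ_\ell)$ is injective, so it suffices to kill $F$ on the latter. Writing $W=\bar V\setminus V$ with its reduced structure, $W$ is proper of dimension $<\dim X$, and the excision sequence
\begin{equation*}
\cdots\lra H^i_c(V_{k^s},\QQ_\ell)\lra H^i(\bar V_{k^s},\QQ_\ell)\lra H^i(W_{k^s},\QQ_\ell)\lra\cdots
\end{equation*}
reduces the task to $\bar V$ and $W$. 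The polynomial for $W$ comes from the induction hypothesis (using $H^*(W_{k^s})=H^*_c(W_{k^s})$ since $W$ is proper), and for $\bar V$ I would take
\begin{equation*}
P_{\bar V}(T):=\prod_{i=0}^{2\dim\bar V}\det\bigl(T-F\mid H^i(\bar V_{k^s},\QQ_\ell)\bigr),
\end{equation*}
which by Deligne lies in $\ZZ[T]$ independently of $\ell$ and kills each $H^i(\bar V_{k^s},\QQ_\ell)$ by Cayley--Hamilton. Multiplying together all the polynomials gathered along the way yields the required $P\in\QQ[T]$.

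For the second claim (smooth $X$, ordinary cohomology), I would invoke Poincar\'e duality: if $d=\dim X$, the Frobenius eigenvalues on $H^i(X_{k^s},\QQ_\ell)$ are obtained from those on $H^{2d-i}_c(X_{k^s},\QQ_\ell)$ by the substitution $\alpha\mapsto q^d/\alpha$, so if $Q(T)\in\QQ[T]$ kills $F$ on the compactly supported cohomology, then the reversed polynomial $T^{\deg Q}Q(q^d/T)\in\QQ[T]$ (after clearing denominators) kills $F$ on every $H^i(X_{k^s},\QQ_\ell)$, still $\ell$-independently.

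The only genuinely deep input is Deligne's $\ell$-independence theorem for smooth projective varieties; the remainder is a standard dévissage. I therefore expect the only real bookkeeping difficulty to be ensuring that the product of killing polynomials collected across the various reduction steps remains a single non-zero rational polynomial independent of $\ell$ that annihilates $F$ on $H^i_c$ for all $i$ simultaneously, which rests entirely on the multiplicativity principle for killing polynomials noted above.
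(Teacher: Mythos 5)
Your proposal is correct and follows essentially the same route as the paper: dévissage by induction on dimension via the excision sequences for $H^*_c$, reduction to the smooth case and then to an open subscheme of a smooth projective variety through de Jong's alteration and Lemma \ref{alter}, Deligne's theorem supplying the $\ell$-independent killing polynomial in the smooth projective case, and Poincar\'e duality for the statement about $H^i$. The only differences are expository: you spell out the multiplicativity of killing polynomials along exact sequences and the eigenvalue inversion under duality, which the paper leaves implicit.
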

\begin{proof}
Note that the second claim follows from the first one and Poincar\'e duality. For the first claim, in the case that $X$ is smooth proper over $k$, this is a consequence of Deligne's theorem in Weil I. For the general case, we will prove it by induction on the dimension of $X$. Firstly, we note that replacing $k$ by a finite extension will not change the question and we can assume that $X$ is integral. Let $U$ be the regular locus of $X$ and $Y=X-U$. There is a long exact sequence
$$H^{i}_c(U_{k^s},\QQ_\ell)\lra H^i_c(X_{k^s},\QQ_\ell)\lra H^i_c(Y_{k^s},\QQ_\ell) \lra H^{i+1}_c(U_{k^s},\QQ_\ell)
$$
By induction, the claim is true for $Y$. Thus, we can assume that $X$ is smooth. Let $X_1\lra X$ be an alteration such that $X_1$ admits a smooth projective compactification. By Lemma \ref{alter}, the pullback map
$$H^i_c(X_{k^s},\QQ_\ell) \lra H^i_c((X_1)_{k^s},\QQ_\ell)$$
is injective. It suffices to prove the claim for $X_1$. Thus, we can assume that $X$ is an open subvariety of smooth projective irreducible variety $\bar{X}$ over $k$. Let $Y$ denote $\bar{X}-X$. Consider the 
long exact sequence
$$H^{i-1}_c(Y_{k^s},\QQ_\ell)\lra H^i_c(X_{k^s},\QQ_\ell)\lra H^i_c(\bar{X}_{k^s},\QQ_\ell) \lra H^{i}_c(Y_{k^s},\QQ_\ell)
.$$
By induction, the claim is true for $Y$. Since the claim is true for $\bar{X}$, it follows that the claim holds for $X$. This completes the proof.
\end{proof}
\begin{thm}\label{bigthm}
Let $X$ be a smooth variety over a finite field $k$ of characteristic $p$. Then
$$H^3(X_{k^s},\ZZ_\ell(1))^{G_k}=0 \ \mathrm{for} \ \ell \gg 0.$$
\end{thm}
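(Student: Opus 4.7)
The plan is to combine Deligne's weight theorem for smooth varieties, the $\ell$-independence of Proposition \ref{lindep}, alterations via Lemma \ref{alter}, the torsion vanishing in low degrees from Proposition \ref{notor}, and the key structural fact that $G_k\cong\hat{\mathbb{Z}}$ has cohomological dimension one, so that $H^1(G_k,N)=N_{G_k}$ and $H^2(G_k,N)=0$.

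First I would establish the rational vanishing. By Deligne's Weil II, $H^3(X_{k^s},\QQ_\ell)$ is mixed with Frobenius weights in $[3,6]$; after twisting, $H^3(X_{k^s},\QQ_\ell(1))$ has weights in $[1,4]$, so $1$ is not a Frobenius eigenvalue and $H^3(X_{k^s},\QQ_\ell(1))^{G_k}=0$. Combined with Proposition \ref{lindep}, the system $\{H^3(X_{k^s},\ZZ_\ell(1))\}_\ell$ satisfies the hypothesis of the ``moreover'' part of Lemma \ref{comp}, so $H^3(X_{k^s},\ZZ_\ell(1))^{G_k}$ is already finite for $\ell\gg 0$. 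By de Jong's alteration there is a proper generically finite $f\colon Y\to X$ of degree $d$ with $Y$ open in a smooth projective $\bar Y$; for $\ell\nmid d$ Lemma \ref{alter} makes $f^\ast$ injective on $H^3$, hence injective on $G_k$-invariants. So we may replace $X$ by $Y$ and assume $X$ is an open subvariety of a smooth projective $\bar X$, with complement $Z=\bar X\setminus X$.

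I would then argue by induction on $\dim Z^{\sing}$, the base case being $Z^{\sing}=\varnothing$ (that is, $Z$ smooth). In the base case, purity gives a Gysin sequence
\[
H^1(Z_{k^s},\ZZ_\ell)\xrightarrow{\alpha}H^3(\bar X_{k^s},\ZZ_\ell(1))\xrightarrow{\beta}H^3(X_{k^s},\ZZ_\ell(1))\xrightarrow{\gamma}H^2(Z_{k^s},\ZZ_\ell)\to\cdots.
\]
For $\ell\gg 0$: Proposition \ref{notor} plus the weight bounds on smooth $Z$ give $H^1(Z_{k^s},\ZZ_\ell)^{G_k}=H^2(Z_{k^s},\ZZ_\ell)^{G_k}=0$; Gabber plus weights give $H^3(\bar X_{k^s},\ZZ_\ell(1))^{G_k}=0$; and the ``moreover'' part of Lemma \ref{comp} upgrades the first statement to $H^1(G_k,H^1(Z_{k^s},\ZZ_\ell))=H^1(Z_{k^s},\ZZ_\ell)_{G_k}=0$. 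Splitting the Gysin sequence into the two short exact sequences $0\to\mathrm{Im}(\alpha)\to H^3(\bar X_{k^s},\ZZ_\ell(1))\to\mathrm{Im}(\beta)\to 0$ and $0\to\mathrm{Im}(\beta)\to H^3(X_{k^s},\ZZ_\ell(1))\to\mathrm{Im}(\gamma)\to 0$ and chasing the Galois cohomology long exact sequences (using $H^2(G_k,-)=0$ and the vanishing $H^1$'s above) forces $H^3(X_{k^s},\ZZ_\ell(1))^{G_k}=0$. For the inductive step I set $\bar X^\circ=\bar X\setminus Z^{\sing}$, a smooth open subvariety of $\bar X$ whose boundary $Z^{\sing}$ has strictly smaller-dimensional singular locus; the inductive hypothesis applied to $\bar X^\circ$ (with smooth projective compactification $\bar X$) yields $H^3(\bar X^\circ_{k^s},\ZZ_\ell(1))^{G_k}=0$, after which the base-case argument applied to the pair $(X,\bar X^\circ)$ with its now-smooth boundary $Z\setminus Z^{\sing}$ completes the step.

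The main obstacle will be the bookkeeping when the boundary $Z$ (or later $Z^{\sing}$) has components of varying codimensions in $\bar X$: purity must be applied iteratively, stratifying by codimension, and the inductive hypothesis must be phrased generally enough to handle this. For the higher-codimension strata the relevant Gysin terms involve $H^j(Z_{k^s},\ZZ_\ell(j'))$ for $j\in\{0,1,2\}$ and $j'<0$, whose Galois invariants again vanish by a weight-plus-Proposition-\ref{notor} argument identical in spirit to the codimension-one case. Apart from this stratification bookkeeping, every ingredient (weights, $\ell$-independence, Proposition \ref{notor}, and the cohomological dimension of $\hat{\mathbb{Z}}$) slots in formally.
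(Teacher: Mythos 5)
Your argument is correct: every step (weights via Weil~II, the alteration reduction through Lemma~\ref{alter}, purity for smooth closed subschemes, torsion-freeness from Gabber and Proposition~\ref{notor}, and the invariants/coinvariants comparison of Lemma~\ref{comp} via Proposition~\ref{lindep}) is used legitimately, and the induction on $\dim Z^{\sing}$ is well-founded since over the perfect field $k$ the singular locus of a reduced closed subscheme is a proper closed subset of strictly smaller dimension. The overall strategy is the same as the paper's, but the d\'evissage is organized differently. The paper avoids any induction: it chooses in one step an intermediate open $U$ with $\bar X\setminus U$ of codimension $\geq 2$ and $U\setminus X$ a smooth divisor, disposing of the singular and higher-codimension boundary by \emph{semi-purity} (so that $H^3_Z=0$ and $H^4_Z$ is computed after discarding a codimension-$\geq 3$ subset), and then handles the divisor step by showing $\Im(a_\ell)$ is itself a compatible system and converting $H^3(U)^{G_k}=0$ into $H^3(U)_{G_k}=0$, hence $\Im(a_\ell)_{G_k}=0$, hence $\Im(a_\ell)^{G_k}=0$, by two applications of Lemma~\ref{comp}. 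You instead peel off the singular locus iteratively, so that only purity for smooth pairs is ever invoked, and you run the Galois chase using $\mathrm{cd}(G_k)=1$ together with the vanishing of $H^1(Z^{sm}_{k^s},\ZZ_\ell)_{G_k}$ (right-exactness of coinvariants then kills $H^1(G_k,\Im(\alpha))$), which spares you from applying Lemma~\ref{comp} to image modules inside $H^3$ of an open variety. The trade-off is exactly the one you flag: your route needs the stratification/induction bookkeeping (and, implicitly, the observation that a smooth boundary decomposes into disjoint pure-codimension pieces so purity applies componentwise, with the codimension-one piece contributing $H^1(Z,\ZZ_\ell)$ and $H^2(Z,\ZZ_\ell)$ with twist $0$, and the codimension-two piece contributing $H^0(Z,\ZZ_\ell(-1))$), whereas the paper pays with one invocation of semi-purity and a second pass through the compatible-system lemma. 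One cosmetic remark: in your first paragraph the finiteness of $H^3(X_{k^s},\ZZ_\ell(1))^{G_k}$ for each $\ell$ is automatic from the weight bound (it is the torsion kernel of $F-1$ on a finitely generated $\ZZ_\ell$-module), so Lemma~\ref{comp} is not needed there; the only genuine issue is the uniform vanishing for $\ell\gg 0$, which your d\'evissage then establishes.
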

\begin{proof}
Note that replacing $k$ by a finite extension will not change the question. Thus, we can assume that $X$ is geometrically connected. There is an alteration (extending $k$ if necessary) $X_1\rightarrow X$ such that $X_1$ is an open subvariety of a smooth projective geometrically connected variety over $k$. By Lemma \ref{alter}, the pullback map 
$$H^3(X_{k^s},\ZZ_\ell(1)) \lra H^3((X_1)_{k^s},\ZZ_\ell(1))$$
is injective for $\ell \gg 0$. So it suffices to prove the claim for $X_1$. Thus, we can assume that $X$ is open subvariety of a smooth projective geometrically connected variety $\bar{X}$ over $k$. There is an open subset $U$ of $\bar{X}$ such that $U$ contains $X$ with $Y=U-X$ a smooth divisor in $U$ and $Z=\bar{X}-U$ is of codimension $\geq 2$ in $\bar{X}$. There is an exact sequence
$$H^3_{Z}(\bar{X}_{k^s},\ZZ_\ell(1))\lra H^3(\bar{X}_{k^s},\ZZ_\ell(1))\lra H^3(U_{k^s},\ZZ_\ell(1))\lra H^4_{Z}(\bar{X}_{k^s},\ZZ_\ell(1)).
$$
By the theorem of purity (cf. \cite{Fuj}), $H^3_{Z}(\bar{X}_{k^s},\ZZ_\ell(1))=0$. By the theorem of semi-purity (cf. \cite{Fuj}), removing a close subset of codimension $\geq 3$ in $\bar{X}$ from $Z$ will not change these groups. So we can assume that $Z$ is smooth and of pure codimension $2$ in $\CY$. Then, by the theorem of purity,
$$H^4_{Z}(\bar{X}_{k^s},\ZZ_\ell(1))\cong H^0(Z_{k^s},\ZZ_\ell(-1))\cong \oplus \ZZ_\ell(-1).$$
It folllows 
$$H^3(\bar{X}_{k^s},\ZZ_\ell(1))^{G_k}\cong H^3(U_{k^s},\ZZ_\ell(1))^{G_k}.$$
Since $H^3(\bar{X}_{k^s},\QQ_\ell(1))$ is of pure weight $1$, so $H^3(\bar{X}_{k^s},\ZZ_\ell(1))^{G_k}$ is a torsion group and therefore, vanishes for $\ell\gg 0$ by Gabber's theorem. Thus, the claim holds for $U$. There is an exact sequence
$$H^3_{Y}(U_{k^s},\ZZ_\ell(1))\lra H^3(U_{k^s},\ZZ_\ell(1))\stackrel{a_\ell}{\lra}H^3(X_{k^s},\ZZ_\ell(1))\lra H^4_{Y}(U_{k^s},\ZZ_\ell(1)).
$$
By the theorem of purity, $H^4_{Y}(U_{k^s},\ZZ_\ell(1))\cong H^2(Y_{k^s},\ZZ_\ell)$. Thus, $H^4_{Y}(U_{k^s},\ZZ_\ell(1))^{G_k}\cong H^2(Y_{k^s},\ZZ_\ell)^{G_k}$ is a torsion abelian group and therefore, vanishes for $\ell \gg 0$ by Proposition \ref{notor}. Thus, it suffices to prove $\Im(a_\ell)^{G_k}=0$ for $\ell \gg 0$. Since $H^3(U_{k^s},\QQ_\ell(1))$ is of weight $\geq 1$, by Proposition \ref{lindep}, $H^3(U_{k^s},\ZZ_\ell(1)), \ell \in I$ and $\Im(a_\ell), \ell \in I$ are compatible systems of $G_k$-modules. Let $M$ be one of these systems. By Lemma \ref{comp}, the vanishing of $M_{\ell}^{G_k}$ and $(M_\ell)_{G_k}$ are equivalent for $\ell \gg 0$. Since $H^3(U_{k^s},\ZZ_\ell(1))^{G_k}=0$ for $\ell \gg 0$, we have $H^3(U_{k^s},\ZZ_\ell(1))_{G_k}=0$ for $\ell \gg 0$. Thus, $\Im(a_\ell)_{G_k}=0$ for $\ell \gg 0$. It follows that
$\Im(a_\ell)^{G_k}=0$ for $\ell \gg 0$. This completes the proof.
\end{proof}
\section{Proof of Theorem \ref{thm2}}
\subsection{Theorem \ref{thm1} for non-compact varieties}
To prove Theorem \ref{thm2}, we need a version of Theorem \ref{thm1} for non-compact smooth varieties over finite fields.

\begin{lem}\label{surj} Let $X$ be a smooth irreducible variety over a finite field $k$ of characteristic $p$. Assuming that $X$ admits a finite flat alteration $X_1\lra X$ such that $X_1$ is an open subvariety of a smooth projective variety over $k$, then the natural map
$$\Br(X)(\non p)\lra\Br(X_{k^s})^{G_k}(\non p)$$
has a cokernel of finite exponent.
\end{lem}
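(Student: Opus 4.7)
My plan has two main stages. First, I will use the finite flat alteration to reduce to the case where $X$ itself is an open subvariety of a smooth projective variety over $k$. Second, I will apply the Hochschild--Serre spectral sequence with $\GG_m$-coefficients and bound the resulting cokernel using the structure of $\Pic(X_{k^s})$ over a finite field.

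For the reduction, let $f\colon X_1\to X$ be the given finite flat alteration of some degree $d$. The composition $f_*\circ f^*$ equals multiplication by $d$ on Brauer groups, and both $f^*$ and $f_*$ commute with the restriction $\Br(-)\to\Br((-)_{k^s})^{G_k}$. So if the lemma holds for $X_1$ with cokernel killed by some integer $N$, then given $\alpha\in\Br(X_{k^s})^{G_k}(\non p)$ I can pick $\beta\in\Br(X_1)(\non p)$ with $\beta|_{(X_1)_{k^s}}=Nf^*\alpha$, and then $f_*\beta\in\Br(X)(\non p)$ restricts to $dN\alpha$. Thus $dN$ kills the cokernel for $X$, and I may assume $X$ itself is open in a smooth projective variety $\bar X$ over $k$ (after possibly extending $k$ by a finite degree, which only enlarges the bound by that factor).

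For the second stage, the Hochschild--Serre spectral sequence $H^p(G_k,H^q(X_{k^s},\GG_m))\Rightarrow H^{p+q}(X,\GG_m)$, combined with $H^p(G_k,k^{s*})=0$ for $p\geq 1$ (using that $k^{s*}=\bigcup_{k'/k\,\text{finite}}(k')^*$ is torsion for finite $k$ and that $G_k\cong\hat\ZZ$ has $\mathrm{cd}(G_k)\leq 1$ on torsion discrete modules), yields the exact sequence
\[
\Br(X)\to\Br(X_{k^s})^{G_k}\to H^2(G_k,\Pic(X_{k^s})),
\]
so the cokernel embeds into the $(\non p)$-part of $H^2(G_k,\Pic(X_{k^s}))$. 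I will then use the structure of $\Pic(X_{k^s})$: the restriction $\Pic(\bar X_{k^s})\twoheadrightarrow\Pic(X_{k^s})$ has finitely generated kernel (generated by the boundary divisors), and since $\Pic^0(\bar X_{k^s})=A(k^s)$ is torsion (an abelian variety over a finite field has finite rational points, and $A(k^s)$ is a union of such), $\Pic(X_{k^s})$ is an extension of a finitely generated $G_k$-module by a torsion one. By the same $\mathrm{cd}\leq 1$ argument the torsion part contributes nothing to $H^2(G_k,-)$.

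The hard part will be showing that the image itself has finite exponent in the $(\non p)$-part. I would work at each $\ell\neq p$ via the Kummer sequence: Hochschild--Serre with $\mu_{\ell^n}$-coefficients gives a surjection $H^2(X,\mu_{\ell^n})\twoheadrightarrow H^2(X_{k^s},\mu_{\ell^n})^{G_k}$ (again by $\mathrm{cd}_\ell(G_k)\leq 1$), and a diagram chase shows the cokernel of $\Br(X)[\ell^n]\to\Br(X_{k^s})^{G_k}[\ell^n]$ embeds into $H^1(G_k,\Pic(X_{k^s})/\ell^n)$. Since $\Pic^0(\bar X_{k^s})$ is $\ell$-divisible for $\ell\neq p$, the quotient $\Pic(X_{k^s})/\ell^n$ is finite. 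Using further that $\Br(X_{k^s})(\ell)$ is of cofinite type (because $H^2(X_{k^s},\mu_{\ell^n})$ is finite for $X$ of finite type and $\ell\neq p$), so its divisible part has finite $\ZZ_\ell$-corank and lifts to $\Br(X)(\ell)$ via the $\ell$-adic Hochschild--Serre, the divisible contribution to the cokernel vanishes; the remaining reduced part is finite. Combining these bounds across $\ell\neq p$ gives the desired uniform finite-exponent bound.
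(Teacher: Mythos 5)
Your first stage (the corestriction identity $f_*f^*=d$ together with compatibility with restriction to the geometric fibre, reducing to the case where $X$ itself is open in a smooth projective variety $\bar{X}$ over $k$) is essentially the paper's own reduction, which it carries out via the norm map $\Nm\colon f_*\GG_m\to\GG_m$. The divergence is in the second stage: at this point the paper simply quotes two nontrivial results --- Yuan's comparison theorem, giving that $\Br(\bar{X})(\non p)\to\Br(\bar{X}_{k^s})^{G_k}(\non p)$ has cokernel of finite exponent, and \cite[Prop.~2.3]{Qin1}, giving the same for the restriction $\Br(\bar{X}_{k^s})^{G_k}(\non p)\to\Br(X_{k^s})^{G_k}(\non p)$ --- whereas you try to replace both by a direct Hochschild--Serre/Kummer argument. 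As sketched, that argument has genuine gaps.

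Concretely: (i) your preliminary bound of the cokernel by $H^2(G_k,\Pic(X_{k^s}))$ can never yield finite exponent, since modulo the torsion image of $\Pic^0(\bar{X}_{k^s})$ this group is $H^2(G_k,\overline{\NS})$ with $\overline{\NS}$ finitely generated, and already $H^2(\hat{\ZZ},\ZZ)\cong\QQ/\ZZ$; you half-acknowledge this, so everything rests on the per-$\ell$ argument. (ii) There the decisive assertion is that the divisible part of $\Br(X_{k^s})^{G_k}(\ell)$ lifts to $\Br(X)(\ell)$ ``via the $\ell$-adic Hochschild--Serre''. For groups of cofinite type this assertion is equivalent to the finiteness of the $\ell$-primary cokernel, i.e.\ it is the statement to be proved; it is not a formal consequence of the surjectivity of $H^2(X,\ZZ_\ell(1))\to H^2(X_{k^s},\ZZ_\ell(1))^{G_k}$, because converting $\ell$-adic classes into Brauer classes forces a comparison of $\Pic\hat{\otimes}\ZZ_\ell$ with $T_\ell\Br$ on both the arithmetic and geometric sides, with control of torsion and of the N\'eron--Severi contribution --- which is precisely the content of the results the paper cites. (iii) Most importantly, even granting finiteness of the $\ell$-primary cokernel for every single $\ell$, your closing sentence is a non sequitur: finiteness for each $\ell$ does not give a finite exponent for the $(\non p)$-part; one must show the $\ell$-primary cokernel vanishes (or has uniformly bounded exponent) for all but finitely many $\ell$. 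Your bound $H^1(G_k,\Pic(X_{k^s})/\ell^n)$ is in general nonzero for every $\ell\neq p$, and nothing in the sketch excludes a nonzero contribution for infinitely many $\ell$. In the paper this uniformity is exactly what Yuan's theorem and \cite[Prop.~2.3]{Qin1} supply (their proofs rest on inputs such as Gabber's theorem for $\ell\gg0$ and absolute purity, with the finitely many boundary divisors of $\bar{X}\setminus X$ controlling the open--projective comparison). Without an input of this kind the proposal does not establish the lemma.
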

\begin{proof}
Let $f:X_1\rightarrow X$ be an alteration as in the assumption. One can define a canonical norm map $\Nm:f_*\GG_m\lra \GG_m$ such that the composition 
$$\GG_m\lra f_*\GG_m \lra \GG_m$$
is equal to the multiplication by $[K(X_1):K(X)]$. Then $\Nm$ induces a natural map
$$\Nm:H^2(X_1,\GG_m)=H^2(X, f_*\GG_m)\lra H^2(X,\GG_m).$$
By definition, the composition of $\Nm$ and $f^*$
$$H^2(X,\GG_m)\stackrel{f^*}{\lra}H^2(X_1,\GG_m)\stackrel{\Nm}{\lra}H^2(X,\GG_m)$$
is the multiplication by $[K(X_1):K(X)]$. It follows that $\Nm$ has a cokernel of finite exponent. Consider the following diagram
\begin{displaymath}
\xymatrix{
	\Br(X_1) \ar[r] \ar[d]^{\Nm} & \Br((X_1)_{k^s})^{G_k}\ar[d]^{\Nm} \\
   \Br(X)\ar[r]  & \Br(X_{k^s})^{G_k}}
\end{displaymath}
The diagram commutes (cf., e.g., \cite[Prop. 2.2]{ABBG}).
Since all vertical maps have cokernels of finite exponent, it suffices to show the first row has a cokernel of finite exponent up to $p$-torsion. Thus, we can assume that $X$ is an open subvariety of a smooth projective variety $\bar{X}$. Consider the following commutative diagram
\begin{displaymath}
\xymatrix{
	\Br(\bar{X})(\non p) \ar[r] \ar[d] & \Br(\bar{X}_{k^s})^{G_k}(\non p)\ar[d] \\
   \Br(X)(\non p)\ar[r]  & \Br(X_{k^s})^{G_k}(\non p)}
\end{displaymath}
It is well-known that the first row has a cokernel of finite exponent (cf. \cite{Yua}). By \cite[Prop. 2.3]{Qin1}, the second column has a cokernel of finite exponent. Thus, the second row has a cokernel of finite exponent. This completes the proof.
\end{proof}
\begin{thm}\label{noncomp}
Let $X$ be a smooth geometrically connected variety over a finite field $k$ of characteristic $p$. Assuming that $\Br(X_{k^s})^{G_k}(\ell)$ is finite for some prime $\ell\neq p$, then $\Br(X_{k^s})^{G_k}(\non p)$ is finite.
\end{thm}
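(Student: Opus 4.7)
The plan is to reduce Theorem~\ref{noncomp} to Theorem~\ref{thm1} applied to a smooth projective variety produced by de Jong's alteration. After possibly extending $k$ (which is harmless for the finiteness question), one finds a proper finite flat morphism $f\colon X_1\to X$ of some degree $d$ with $X_1$ an open subvariety of a smooth projective geometrically connected variety $\bar X_1$ over $k$. Let $Y_1 := \bar X_1\setminus X_1$; using semi-purity (cf.\ \cite{Fuj}) we may shrink to assume $Y_1$ is smooth of pure codimension one, which does not affect any of the Brauer groups in question.

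The key geometric step is a Gysin comparison between $X_1$ and $\bar X_1$. Absolute purity produces the exact sequence
\[
H^0(Y_{1,k^s},\QQ_\ell) \to H^2(\bar X_{1,k^s},\QQ_\ell(1)) \to H^2(X_{1,k^s},\QQ_\ell(1)) \to H^1(Y_{1,k^s},\QQ_\ell).
\]
Taking $G_k$-invariants, the last term vanishes by the Weil conjectures (the weights on $H^1$ of a smooth variety are $\geq 1$), and the image of $H^0(Y_{1,k^s},\QQ_\ell)^{G_k}$ consists of algebraic divisor classes that lie in $\NS(\bar X_{1,k^s})\otimes\QQ_\ell$. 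Combined with the Kummer exact sequences for $\bar X_1$ and $X_1$ and a snake-lemma chase (using that $\NS(\bar X_{1,k^s})\twoheadrightarrow \NS(X_{1,k^s})$), this yields an isomorphism $V_\ell\Br((\bar X_1)_{k^s})^{G_k}\xrightarrow{\sim} V_\ell\Br((X_1)_{k^s})^{G_k}$ for every $\ell\ne p$. The integral version, combined with Gabber's torsion-freeness (Proposition~\ref{notor}), upgrades this to a surjection on $\ell$-torsion of the Brauer groups for $\ell\gg 0$. On the alteration side, the identity $\Nm\circ f^* = d$ (as in the proof of Lemma~\ref{surj}) realizes $V_\ell\Br(X_{k^s})^{G_k}$ as a direct summand of $V_\ell\Br((X_1)_{k^s})^{G_k}$ for every $\ell\ne p$, with the kernel of $f^*\colon\Br(X_{k^s})^{G_k}(\ell)\to\Br((X_1)_{k^s})^{G_k}(\ell)$ killed by $d$. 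Assembling the chain $X\leftarrow X_1\hookrightarrow\bar X_1$ and applying Theorem~\ref{thm1} to the smooth projective $\bar X_1$ over the finite (hence finitely generated) field $k$ then transfers the finiteness information through the system, yielding finiteness of $\Br(X_{k^s})^{G_k}(\non p)$.

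The main obstacle is the transfer of the hypothesis from $X$ to $\bar X_1$: since the alteration only realizes $V_\ell\Br(X_{k^s})^{G_k}$ as a direct summand of $V_\ell\Br((X_1)_{k^s})^{G_k}$, finiteness of the former at $\ell_0$ does not immediately yield the hypothesis of Theorem~\ref{thm1} for $\bar X_1$. Overcoming this requires exploiting the compatible-system structure of $H^2$ in the spirit of Lemma~\ref{comp} together with the Frobenius weight structure shared by $X$, $X_1$, and $\bar X_1$, which forces the complementary summand to vanish under the hypothesis on $X$. This is the delicate technical heart of the reduction, analogous in spirit to the role played by Theorem~\ref{bigthm} in the proof of Theorem~\ref{thm1}.
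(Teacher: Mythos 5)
There is a genuine gap, and it is exactly the step you yourself flag as the ``delicate technical heart'': the transfer of the hypothesis from $X$ to $\bar X_1$. The norm identity $\Nm\circ f^*=d$ only exhibits $V_\ell\Br(X_{k^s})^{G_k}$ as a \emph{direct summand} of $V_\ell\Br((X_1)_{k^s})^{G_k}$, so the hypothesis at a single prime $\ell_0$ (i.e.\ $V_{\ell_0}\Br(X_{k^s})^{G_k}=0$) says nothing about the complementary summand, which is precisely the ``new'' transcendental part of $H^2$ of the alteration. No compatible-system or weight argument can force it to vanish: all the classes at issue are Frobenius-eigenvalue-$1$ (weight-zero after the twist) classes, which is exactly the range where weights cannot separate algebraic from transcendental cycles, and compatible systems only propagate information across $\ell$, not from $X$ to $X_1$. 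Concretely, if your proposed mechanism worked it would prove far too much: every smooth projective surface $S$ over a finite field admits a generically finite map to $\PP^2$, finite flat over a dense open $X\subseteq\PP^2$; taking $X_1\subseteq \bar X_1=S$ the preimage, the hypothesis on $X$ holds unconditionally (Tate for $\PP^2$), so your transfer step would yield the hypothesis of Theorem~\ref{thm1} for $\bar X_1=S$, i.e.\ the full conjecture $T^1(S,\ell)$ for all surfaces over finite fields. This is why the paper's own remark after Theorem~\ref{noncomp} states that the reduction to Theorem~\ref{thm1} works only if one assumes resolution of singularities (so that $X$ itself sits inside a smooth projective compactification); with alterations alone the reduction is not available.

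For comparison, the paper never transfers the Tate-type hypothesis to $X_1$ or $\bar X_1$ at all. It first invokes \cite[Thm.~3.5]{Qin2} to get finiteness of $\Br(X_{k^s})^{G_k}(\ell)$ for \emph{all} $\ell\neq p$ directly for the open variety $X$, and then proves $\Br(X_{k^s})^{G_k}(\ell)=0$ for $\ell\gg 0$ by a route in which the alteration enters only through unconditional statements: Lemma~\ref{surj} (surjectivity of $\Br(X)(\ell)\to\Br(X_{k^s})^{G_k}(\ell)$ for $\ell\gg0$, via the norm trick and comparison with a compactification of $X_1$) and the torsion results Proposition~\ref{notor} and Theorem~\ref{bigthm}, which give $H^3(X,\ZZ_\ell(1))_{\tor}=0$ for $\ell\gg0$ through the Hochschild--Serre sequence and the compatible-system Lemma~\ref{comp} applied to $H^2(X_{k^s},\ZZ_\ell(1))$; the finiteness hypothesis is used only there, and only for $X$ itself. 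Your Gysin comparison between $X_1$ and $\bar X_1$ and the norm splitting are fine as far as they go, but without a substitute for the hypothesis-transfer step the reduction to Theorem~\ref{thm1} collapses, so the proposal does not prove the theorem.
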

\begin{proof}
By \cite[Thm. 3.5]{Qin2}, $\Br(X_{k^s})^{G_k}(\ell)$ is finite for all $\ell \neq p$. By \cite[Prop. 2.3]{Qin1}, shrinking $X$ will not change the question. Thus, we can assume that $X$ satisfies the assumption in Lemma \ref{surj}. So the natural map $\Br(X)(\ell) \rightarrow \Br(X_{k^s})^{G_k}(\ell)$ is surjective for $\ell \gg 0$. It suffices to show that $\Br(X)(\ell)$ is divisible for $\ell \gg 0$. Consider the exact sequence
$$0\lra \Br(X)/\ell^n\lra H^3(X,\ZZ/\ell^n(1)) \lra H^3(X,\GG_m)[\ell^n]\lra 0.$$
Taking limit, we get an injection
$$\Br(X)\hat{\otimes}\ZZ_\ell\hookrightarrow H^3(X,\ZZ_\ell(1)).$$
Since $\Br(X)$ is a torsion abelian group of cofinite type, we have
$$\Br(X)\hat{\otimes}\ZZ_\ell \cong \Br(X)(\ell)/(\Br(X)(\ell))_{div}.$$
Thus, it suffices to show $H^3(X,\ZZ_\ell(1))_{tor}=0$ for $\ell \gg 0$. By the Hochschild-Serre spectral sequence
$$E_2^{p,q}=H^p_{cts}(G_k,H^q(X_{k^s}, \ZZ_\ell(1)))\Rightarrow H^{p+q}(X, \ZZ_\ell(1))$$
and the vanishing of $E_2^{p,q}$ for $p\geq 2$, we get an exact sequence
$$0\lra H^2(X_{k^s}, \ZZ_\ell(1))_{G_k}\lra H^3(X, \ZZ_\ell(1))\lra H^3(X_{k^s}, \ZZ_\ell(1))^{G_k}\lra 0.
$$
By Theorem \ref{bigthm}, the third term vanishes for $ \ell \gg 0$. It suffices to show 
$$H^2(X_{k^s}, \ZZ_\ell(1))_{G_k}[\ell]=0 \ \mathrm{for} \ \ell \gg 0 .
$$
By \cite[Lem. 3.4]{Qin2}, there is a split exact sequence of $G_k$-representations
$$0\lra \Pic(X_{k^s})\otimes_\ZZ\QQ_\ell \lra H^2(X_{k^s}, \QQ_\ell(1))\lra V_\ell\Br(X_{k^s})\lra 0.
$$
Notice that $H^2(X_{k^s}, \ZZ_\ell(1))_{tor}=0$ for $\ell \gg 0$ by Lemma \ref{notor}. Set $M_\ell=H^2(X_{k^s}, \ZZ_\ell(1))$, by a same argument as in the proof of Lemma \ref{lem1}, $M=(M_\ell, \ell\in I)$ is a compatible system of $G_k$-modules. By the exact same argument of the proof of the second claim in  Lemma \ref{lem1}, we have  $(M_\ell)_{tor}=0$ for $\ell \gg 0$. This completes the proof.
\end{proof}
\begin{rem}
It is possible that Theorem \ref{noncomp} still holds for any finitely generated field $k$ of positive characteristic. The tricky part is to show the $\ell$-independence of the finiteness of $\Br(X_{k^s})^{G_k}(\ell)$. Assuming resolution of singularity for varieties over fields of positive characteristic, then Theorem \ref{noncomp} can be derived directly from Theorem \ref{thm1}.
\end{rem}
\subsection{Weil restriction}
To reduce Theorem \ref{thm2} to Theorem \ref{noncomp}, we need a technique of Weil restriction for abelian varieties. Following Milne's paper \cite{Mil2}, we will show that the Tate-Shafarevich group for an abelian variety does not change under Weil restriction. In fact, $L$-function for an abelian variety does not change under Weil restriction either. So the same idea can give a new proof of Theorem 1.9 in \cite{Qin2}.
\begin{prop}\label{wrest}
Let $f:\CZ\rightarrow \CY$ be a finite etale morphism between smooth geometrically connected varieties over a finite field $k$ of characteristic $p$. Write $K$ (resp. $L$) for the function field of $\CY$ (resp. $\CZ$). Let $B$ be an abelian variety over $L$ and $A$ be its Weil restriction to $K$. Assuming that $A$ and $B$ extend to abelian schemes over $\CY$ and $\CZ$ respectively. Then
$$\Sha_{\CY}(A)(\ell)\cong \Sha_{\CY}(B)(\ell)$$
for any $\ell \neq p$.
\end{prop}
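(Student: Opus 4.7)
The plan is to exploit the basic adjunction for Weil restriction: for every $K$-algebra $R$ one has $A(R)=B(R\otimes_K L)$, and since $f:\CZ\to\CY$ is finite \'etale, the abelian scheme $A$ over $\CY$ agrees with the \'etale pushforward $f_*B$. The finiteness of $f$ forces $R^if_*B=0$ for $i\geq 1$, so the comparison of $\Sha_\CY(A)$ and $\Sha_\CZ(B)$ reduces to Shapiro-type identifications at the generic point of $\CY$ and at each of its codimension $1$ points.

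For the global term, the Leray spectral sequence for $\Spec L\to\Spec K$ collapses and yields a canonical isomorphism $H^1(K,A)\cong H^1(L,B)$. For the local terms, fix $s\in\CY^1$. Because $f$ is finite \'etale, the base change $\Spec \CO_{\CY,s}^{sh}\times_\CY \CZ$ is a disjoint union of strict henselizations $\Spec \CO_{\CZ,z}^{sh}$ indexed by the finitely many $z\in\CZ^1$ lying above $s$. Passing to fraction fields gives $K_s^{sh}\otimes_K L\cong \prod_{z\mid s}L_z^{sh}$, and the Weil restriction identity combined with Shapiro's lemma gives
$$H^1(K_s^{sh},A)\cong \prod_{z\mid s}H^1(L_z^{sh},B).$$
Taking the product over $s\in\CY^1$, and using that every $z\in\CZ^1$ lies over a unique $s\in\CY^1$, one obtains
$$\prod_{s\in\CY^1}H^1(K_s^{sh},A)\cong \prod_{z\in\CZ^1}H^1(L_z^{sh},B).$$

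Combining the two identifications and passing to kernels yields $\Sha_\CY(A)\cong \Sha_\CZ(B)$, and hence an isomorphism on $\ell$-primary parts for every $\ell\neq p$. The main obstacle is not the algebraic identities themselves but verifying that the global and local Shapiro isomorphisms are intertwined by the restriction maps defining $\Sha$; this is a diagram chase based on the naturality of $f_*$ and of the Leray spectral sequence. The hypothesis that $B$ (and hence $A$) extends to an abelian scheme over $\CZ$ (resp. $\CY$) enters precisely to make sense of the identification $A=f_*B$ as sheaves on the small \'etale site of $\CY$, which is what legitimises the Shapiro-type vanishing $R^if_*B=0$ used throughout.
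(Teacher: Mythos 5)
Your argument is correct, but it takes a genuinely different route from the paper. The paper never touches the defining kernels directly: it invokes \cite[Lem. 5.2]{Qin2} to identify $\Sha_{\CY}(A)(\ell)\cong H^1(\CY,\SA)(\ell)$ and $\Sha_{\CZ}(B)(\ell)\cong H^1(\CZ,\SB)(\ell)$ for the abelian schemes $\SA,\SB$ extending $A,B$, identifies $\SA\cong i_*\CA$ and $\SB\cong j_*\CB$ via Keller's theorem \cite[Thm. 3.3]{Kel1} to deduce $f_*\SB\cong\SA$, and then concludes from $H^1(\CZ,\SB)\cong H^1(\CY,f_*\SB)$ (vanishing of higher direct images under the finite map $f$). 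You instead work with the kernels themselves, using the Weil-restriction adjunction $A(R)=B(R\otimes_KL)$ together with Shapiro's lemma at the generic point and at each $K_s^{sh}$; the intertwining you defer to a ``diagram chase'' is exactly the statement that pushforward along the finite morphism $\Spec L\to\Spec K$ commutes with base change to $\Spec K_s^{sh}$, so it is indeed routine. Two remarks on the comparison. First, your local decomposition should be indexed by the geometric points of $\CZ$ over $\bar s$ rather than by the scheme points $z\in\CZ^1$ over $s$: when $k(z)/k(s)$ is a nontrivial (separable) extension, $K_s^{sh}\otimes_KL$ contains several factors isomorphic to $L_z^{sh}$, so your displayed isomorphism $\prod_{s}H^1(K_s^{sh},A)\cong\prod_{z}H^1(L_z^{sh},B)$ is only correct up to repeated (conjugate) factors; this does not affect the kernels, since a class in $H^1(L,B)$ dies in one copy of $H^1(L_z^{sh},B)$ if and only if it dies in all conjugates, but it should be said. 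Second, your closing explanation of where the abelian-scheme hypothesis enters is misplaced: $R^if_*=0$ for $i\geq1$ holds for any abelian sheaf under a finite morphism, and your argument uses only the generic-fibre and strictly henselian local identifications, so in fact your route needs neither the integral models nor the restriction to $\ell\neq p$ and yields $\Sha_{\CY}(A)\cong\Sha_{\CZ}(B)$ integrally; the paper's hypotheses are what allow it to quote \cite{Kel1} and \cite{Qin2} and keep the proof to a few lines, at the price of working $\ell$-primary part by $\ell$-primary part.
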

\begin{proof}
Let $\CB$ (resp. $\CA$) denote the etale sheaf on $\Spec L$ associated to the Galois module $B(L^s)$ (resp. $B(K^s)$). Let $f$ denote the induced morphism $\Spec L\rightarrow \Spec K$. Then the sheaf $f_*\CB$ is isomorphic to the sheaf $\CA$. This can be seen by checking the equivalence between Frobenius reciprocity and adjoint property of $f^*$ and $f_*$. Let $\SA \rightarrow \CY$ (resp.  $\SB \rightarrow \CZ$) be an abelian scheme extending $A\rightarrow \Spec K$ (resp. $B\lra \Spec L$). Let $i$ (resp. $j$) denote the morphism $\Spec K\rightarrow \CY$ (resp. $\Spec L\rightarrow \CZ$). By \cite[Thm. 3.3]{Kel1}, we have $ \SA\cong i_*\CA$ and $\SB\cong j_*\CB$. Thus,
$f_*\SB\cong \SA.$ By \cite[Lem. 5.2]{Qin2}, we have 
$$\Sha_{\CY}(A)(\ell)\cong H^1(\CY, \SA)(\ell) \ \mathrm{and} \ \Sha_{\CZ}(B)(\ell)\cong H^1(\CZ, \SB)(\ell).$$
Since $f$ is finite, we have $H^1(\CZ, \SB)\cong H^1(\CY, f_*\SB)$. Then, the claim follows from 
$$
H^1(\CZ, \SB)\cong H^1(\CY, f_*\SB)\cong H^1(\CY, \SA).
$$
\end{proof}
\begin{rem}
By the same argument, we have $f_*V_\ell \SB\cong V_\ell \SA$. Thus, $f_*V_\ell \SB(-1)\cong V_\ell \SA(-1)$. It follows $L(\CY, V_\ell\SA(-1), s)=L(\CY, f_*V_\ell\SB(-1), s)=L(\CZ, V_\ell\SB(-1), s)$. Since $A(K)=B(L)$ by definition, so the BSD conjecture for $A$ is equivalent to the BSD conjecture for $B$.
\end{rem}
\subsection{Proof of Theorem \ref{thm2}}
\begin{lem}\label{flem}
Let $K=k(t_1,...,t_m)$ be a purely transcendental extension of a finite field $k$. Let $A$ be an abelian variety over $K$. Then Theorem \ref{thm2} holds for $A$.
\end{lem}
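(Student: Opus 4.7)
The plan is to reduce Lemma \ref{flem} to Theorem \ref{noncomp} by passing to the total space of a N\'eron model of $A$. By a standard excision argument (analogous to \cite[Prop.~2.3]{Qin1}), shrinking $\CY$ changes $\Sha_\CY(A)(\ell)$ only by a finite group, so I may assume $\CY$ is a smooth open subvariety of $\AA^m_k$ over which $A$ extends to an abelian scheme $\pi:\SA\to\CY$. The total space $\SA$ is then a smooth geometrically connected variety over the finite field $k$, so Theorem \ref{noncomp} directly applies to it: finiteness of $\Br(\SA_{k^s})^{G_k}(\ell_0)$ for one $\ell_0\neq p$ forces finiteness of $\Br(\SA_{k^s})^{G_k}(\non p)$.

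The bridge between $\Sha_\CY(A)(\ell)$ and $\Br(\SA_{k^s})^{G_k}(\ell)$ is built as follows. Using the identification $\Sha_\CY(A^\vee)(\ell)\cong H^1(\CY,\SA^\vee)(\ell)$ from \cite[Lem.~5.2]{Qin2}, together with the inclusion $\SA^\vee=\Pic^0_{\SA/\CY}\subseteq R^1\pi_*\GG_m$, the Leray spectral sequence for $\pi$ applied to $\GG_m$ yields (after passing to $k^s$ and taking $G_k$-invariants) a comparison between $H^1(\CY,\SA^\vee)(\ell)$ and a subquotient of $\Br(\SA_{k^s})^{G_k}(\ell)$, with error terms controlled by $H^p(\CY_{k^s}, R^q\pi_*\GG_m)$ for small $p,q$ and by the finitely generated quotient $\NS_{\SA/\CY}$. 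The purely transcendental hypothesis enters precisely here: since $\CY$ is rational (an open in $\AA^m_k$), its geometric Picard and Brauer groups are tightly controlled, so the error terms are uniformly bounded in $\ell$. A polarization on $A$ (which exists because $A$ is projective) yields an isogeny $A\to A^\vee$, identifying $\Sha_\CY(A)(\ell)$ and $\Sha_\CY(A^\vee)(\ell)$ up to finite error uniformly in $\ell$.

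Combining the pieces: finiteness of $\Sha_\CY(A)(\ell_0)$ for some $\ell_0\neq p$ transfers to finiteness of $\Sha_\CY(A^\vee)(\ell_0)$, hence of $\Br(\SA_{k^s})^{G_k}(\ell_0)$; Theorem \ref{noncomp} promotes this to finiteness of $\Br(\SA_{k^s})^{G_k}(\non p)$; and reversing the comparison and undoing the polarization identification yields finiteness of $\Sha_\CY(A)(\non p)$, as desired.

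The main obstacle is the Leray-based comparison in the middle: one must carefully identify which pieces of $R^q\pi_*\GG_m$ (for $q=0,1,2$) contribute to $\Sha_\CY(A^\vee)$ versus $\Br(\SA)$, and show that the auxiliary contributions from $\Br(\CY_{k^s})^{G_k}$, from the N\'eron--Severi quotient, and from $H^p(\CY_{k^s}, R^q\pi_*\GG_m)$ remain uniformly bounded as $\ell$ varies. The rationality of $\CY$ inherited from $K=k(t_1,\ldots,t_m)$ is exactly the input that makes this uniform error control possible, echoing the role that \emph{ad hoc} geometric control has played in the earlier sections.
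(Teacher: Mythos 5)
Your overall strategy is the same one the paper follows: spread $A$ (or its dual) out to an abelian scheme $\pi:\SA\to\CY$ over a rational base, relate $\Sha_{\CY}$ to the Brauer group of the total space, and feed that total space into Theorem \ref{noncomp}. But the ``bridge'' you defer as the main obstacle is exactly where an essential input is missing, and rationality of $\CY$ does not supply it. In the Leray filtration of $\Br(\SA_{k^s})^{G_k}$ the piece coming from $H^0(\CY_{k^s},R^2\pi_*\GG_m)$ --- equivalently the image of restriction to the geometric generic fiber, $\Br(\SA_{k^s})^{G_k}\to \Br(A_{K^s})^{G_K}$ --- concerns the abelian-variety fibers, not the base, and no control on $\Pic$ or $\Br$ of an open subset of $\AA^m_k$ bounds it. Finiteness of $\Sha_{\CY}(A)(\ell_0)$ together with $V_{\ell_0}\Br(\CY_{k^s})^{G_k}=0$ only controls the \emph{kernel} of that restriction; to conclude that $\Br(\SA_{k^s})^{G_k}(\ell_0)$ is finite (so that Theorem \ref{noncomp} applies at all) you must also know $V_{\ell_0}\Br(A_{K^s})^{G_K}=0$, i.e.\ the Tate conjecture $T^1$ for the abelian variety over $K$, which is Zarhin's theorem. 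The paper's proof invokes exactly this through the exact sequence of \cite[Thm.~1.11]{Qin2},
$$0\lra V_\ell\Br(\CY_{k^s})^{G_k}\lra \Ker\bigl(V_\ell\Br(\SA^t_{k^s})^{G_k}\lra V_\ell\Br(A^t_{K^s})^{G_K}\bigr)\lra V_\ell\Sha_{\CY}(A)\lra 0,$$
where rationality of $\CY$ kills the first term and Zarhin's theorem kills the generic-fiber term; your proposal omits the latter entirely, so the step ``finiteness of $\Sha(\ell_0)$ hence of $\Br(\SA_{k^s})^{G_k}(\ell_0)$'' does not go through as written.

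A second, smaller gap: the statement concerns $\Sha_{\CY}(A)$, while the comparison over $\CY_{k^s}$ (at torsion level, for $\ell\gg 0$) can only give vanishing of $\Sha_{\CY_{k^s}}(A)^{G_k}(\ell)$. One still needs the map $\Sha_{\CY}(A)\to \Sha_{\CY_{k^s}}(A)^{G_k}$ to have finite kernel; the paper obtains this from the finiteness of $H^1(G_k,A(Kk^s))$ via the Lang--N\'eron theorem and Lang's theorem. This term is not among your listed error terms and is not controlled by rationality of $\CY$ either. (Your polarization device for passing between $A$ and $A^\vee$ is harmless --- the paper sidesteps it by modelling $A^t$ and using $A=\Pic^0_{A^t/K}$ --- and your reliance on uniform boundedness at small $\ell$ could be replaced by the $\ell$-independence results of \cite{Qin2}; but neither of these repairs the missing input of Zarhin's theorem or the Lang--N\'eron descent step.)
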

\begin{proof}
Let $\CY$ be a smooth geometrically connected variety over $k$ with a function field $K$. Let $A^t$ denote $\Pic^0_{A/K}$. Then $A$ can be identified with $\Pic^0_{A^t/K}$. By \cite[Lem. 5.2]{Qin2}, without loss of generality, we can shrink $\CY$ such that $A^t$ extends to an abelian scheme $\pi:\SA^t\rightarrow\CY$. By \cite[Thm. 1.11]{Qin2}, there is an exact sequence
$$0\lra V_\ell\Br(\CY_{k^s})^{G_k}\lra \Ker (V_\ell\Br(\SA^t_{k^s})^{G_k}\lra V_\ell\Br(A^t_{K^s})^{G_K})\lra V_\ell\Sha_{\CY}(A)\lra 0.$$
Since $T^1(X,\ell)$ holds for rational varieties and abelian varieties(Zarhin's theorem), we have $V_\ell(\CY_{k^s})^{G_k}=0$ and $V_\ell\Br(A^t_{K^s})^{G_K}=0$. Assuming that $\Sha_{\CY}(A)(\ell)$ is finite, then $V_\ell\Sha_{\CY}(A)=0$. Thus, $V_\ell\Br(\SA^t_{k^s})^{G_k}=0$. So $\Br(\SA^t_{k^s})^{G_k}(\ell)$ is finite. By Theorem \ref{noncomp}, $\Br(\SA^t_{k^s})^{G_k}(\ell)=0$ for $\ell \gg 0$. In fact, the proof of  \cite[Thm. 1.11]{Qin2} can imply that there is an exact sequence for $\ell \gg 0$
$$0\lra \Br(\CY_{k^s})^{G_k}(\ell)\lra \Ker (\Br(\SA^t_{k^s})^{G_k}(\ell)\lra \Br(A^t_{K^s})^{G_K}(\ell))\lra \Sha_{\CY_{k^s}}(A)^{G_k}(\ell)\lra 0.$$
It follows that $\Sha_{\CY_{k^s}}(A)^{G_k}(\ell)=0$ for $\ell \gg 0$. Next, we will show that the natural map
$$\Sha_{\CY}(A)\lra \Sha_{\CY_{k^s}}(A)^{G_k}$$
has a finite kernel. Let $K^\prime$ denote $Kk^s$. There is an exact sequence
$$0\lra H^1(G_k, A(K^\prime))\lra H^1(K,A)\lra H^1(K^\prime, A).$$
It suffices to show that $H^1(G_k, A(K^\prime))$ is finite. By Lang-N\'eron Theorem, the quotient
$$A(K^\prime)/\Tr_{K/k}(A)(k^s)$$
is a finitely generated abelian group. Thus, $H^1(G_k,A(K^\prime)/\Tr_{K/k}(A)(k^s))$ is finite. By Lang's theorem, $H^1(G_k,\Tr_{K/k}(A)(k^s))=0$. Thus, 
$H^1(G_k, A(K^\prime))$ is finite. So
$$\Sha_{\CY}(A)(\ell)\lra \Sha_{\CY_{k^s}}(A)^{G_k}(\ell)$$
is injective for $\ell \gg 0$. It follows that $\Sha_{\CY}(A)(\ell)=0$ for $\ell \gg 0$. This completes the proof.
\end{proof}
Proof of Theorem \ref{thm2}.
\begin{proof}
Since $k$ is perfect, so $K/k$ is separably generated, i.e. there exist algebraic independent elements $t_1,..., t_m$ in $K$ such that $K/k(t_1,...,t_m)$ is a finite separable extension. Let $M$ denote $k(t_1,...,t_m)$. Let $B$ be the Weil restriction of $A$ to $M$. By Proposition \ref{wrest}, it suffices to prove the claim for $B$. By Lemma \ref{flem}, the claim holds for $B$. This completes the proof.
\end{proof}

\end{document}